\begin{document}

\makeatletter
\@addtoreset{figure}{section}
\def\thefigure{\thesection.\@arabic\c@figure}
\def\fps@figure{h,t}
\@addtoreset{table}{bsection}

\def\thetable{\thesection.\@arabic\c@table}
\def\fps@table{h, t}
\@addtoreset{equation}{section}
\def\theequation{
\arabic{equation}}
\makeatother

\theoremstyle{plain}
\newtheorem{theorem}{Theorem}
\newtheorem{corollary}[theorem]{Corollary}
\newtheorem{definition}[theorem]{Definition}
\newtheorem{example}[theorem]{Example}
\newtheorem{lemma}[theorem]{Lemma}
\newtheorem{proposition}[theorem]{Proposition}
\newtheorem{question}[theorem]{Question}
\newtheorem{remark}[theorem]{Remark}

\numberwithin{theorem}{section}
\numberwithin{equation}{section}

\newcommand{\todo}[1]{\vspace{5 mm}\par \noindent
\framebox{\begin{minipage}[c]{0.85 \textwidth}
\tt #1 \end{minipage}}\vspace{5 mm}\par}


\newcommand{\1}{{\bf 1}}

\newcommand{\ad}{{\rm ad}}
\newcommand{\Ad}{{\rm Ad}}

\newcommand{\diag}{{\rm diag}}
\newcommand{\gl}{{{\mathfrak g}{\mathfrak l}}}
\newcommand{\id}{{\rm id}}
\newcommand{\Ker}{{\rm Ker}\,}
\newcommand{\Ran}{{\rm Ran}\,}
\newcommand{\rank}{{\rm rank}\,}
\newcommand{\slalg}{{\mathfrak{sl}}}
\renewcommand{\sp}{{{\mathfrak s}{\mathfrak p}}}
\newcommand{\so}{{{\mathfrak s}{\mathfrak o}}}

\newcommand{\Tr}{{\rm Tr}\,}

\newcommand{\CC}{{\mathbb C}}
\newcommand{\NN}{{\mathbb N}}
\newcommand{\RR}{{\mathbb R}}
\newcommand{\TT}{{\mathbb T}}
\newcommand{\ZZ}{{\mathbb Z}}

\newcommand{\bb}{b}
\newcommand{\vv}{v}
\newcommand{\ww}{w}

\newcommand{\Bc}{B}
\newcommand{\Cc}{{\mathcal C}}
\newcommand{\Hc}{H}
\renewcommand{\gg}{{\mathfrak g}}
\newcommand{\hg}{{\mathfrak h}}
\newcommand{\og}{{\mathfrak o}}
\newcommand{\ug}{{\mathfrak u}}
\newcommand{\nng}{{\mathfrak n}}
\newcommand{\pg}{{\mathfrak p}}
\newcommand{\Sginfty}{K}

\markboth{}{}








\makeatletter
\title[$B(H)$-Commutators: A Historical Survey II and recent advances]
{$B(H)$-Commutators: A Historical Survey II  and 
recent advances on 
commutators of compact operators}
\author[Daniel Belti\c t\u a]{Daniel Belti\c t\u a$^*$}
\thanks{$^*$
Partially supported by the Grant
of the Romanian National Authority for Scientific Research, CNCS-UEFISCDI,
project number PN-II-ID-PCE-2011-3-0131, 
and by Project MTM2010-16679, DGI-FEDER, of the MCYT, Spain.}
\author[Sasmita Patnaik]{Sasmita Patnaik$^{**}$}
\thanks{$^{**}$Partially supported by the Graduate Dissertation Fellowship from the Charles Phelps Taft Research Center.}
\author[Gary Weiss]{Gary Weiss$^{***}$}
\thanks{$^{***}$
Partially supported by 
Simons Foundation Collaboration Grant for Mathematicians \#245014.}
\address{Institute of Mathematics ``Simion
Stoilow'' of the Romanian Academy,
P.O. Box 1-764, Bucharest, Romania}
\email{Daniel.Beltita@imar.ro, beltita@gmail.com}
\address{University of Cincinnati, Department of Mathematics, Cincinnati, OH, 45221-0025, USA}
\email{sasmita\_19@yahoo.co.in}
\address{University of Cincinnati, Department of Mathematics, Cincinnati, OH, 45221-0025, USA}
\email{gary.weiss@math.uc.edu}
\dedicatory{Dedicated to the memory of Mih\'aly Bakonyi}

\makeatother

\keywords{Commutators, commutator ideals, ideals, 
trace, trace class, Hilbert-Schmidt class, self-commutators, classical Lie algebras}
\subjclass[2000]{Primary: 47B47, 47B10, 47L20; \quad \\ Secondary: 47-02, 47L30, 17B65, 17B22}

\begin{abstract}
A sequel to \cite{gW05}, we address again the single commutator problem \cite{PT71} of Pearcy and Topping: Is every compact operator a single commutator of compact operators? by focusing on a 35 year old test question for this posed in 1976 by the last named author and others: Are there any strictly positive operators that are single commutators of compact operators? The latter we settle here affirmatively with a modest modification of Anderson's fundamental construction \cite{jA77} constructing compact operators whose commutator is a rank one projection. Moreover we provide here a rich class of such strictly positive operators that are commutators of compact operators and pose a question for the rest. 

We explain also how these methods are related to the study of staircase matrix forms, their equivalent block tri-diagonal forms, and commutator problems. In particular, we present the original test question and solution that led to the negative solution of the Pearcy-Topping question on whether or not every trace class trace zero operator was a commutator (or linear combination of commutators) of Hilbert-Schmidt operators. And we show how this evolved from staircase form considerations along with a Larry Brown result on trace connections to ideals \cite{lB94} which itself is at the core of \cite[Section 7]{DFWW}. 

The omission in \cite{gW05} of this important 35 year old test question was inadvertent and we correct that in this paper.
This sequel starts where [ibid] left off but can be read independently of [ibid]. 

The present paper also has a section on self-commutator equations $[X^*,X]=A$ within the framework of certain classical Lie algebras of compact operators. That is, for the target operator $A$ in an operator Lie algebra one tries to find a solution $X$ in the same operator Lie algebra.
That problem was solved by P. Fan and C.K. Fong (1980) in the case of the full algebra of compact operators, and we establish versions of that result for the complex symplectic Lie algebra of compact operators as well as for any finite-dimensional  complex semisimple Lie algebra.
\end{abstract}

\maketitle


\section{Introduction}\label{Sect1}
Commutators, linear operators of the form $AB-BA$, appear early on for instance in a mathematical formulation of Heisenberg's Uncertainty Principle \cite{wH27}. 
A simple concrete example is the 
product rule in calculus applied to $xf$  expressed in terms of operators: 
$I = \frac{d}{dx}M_x - M_x\frac{d}{dx}$ where the operators act on the class of differentiable functions. 
The situation changes in $B(H)$, that is, when the operators act boundedly on a Hilbert space. Wintner \cite{aW47} and Wielandt \cite{hW49} in 1947 and 1949, respectively, gave two elegant distinct proofs that the identity is not a commutator of two bounded linear operators on a Hilbert space. 
Both apply also to arbitrary complex normed algebras with unit, except Wintner's proof requires that the norm be complete. 
For the period preceding 1967, A Hilbert Space Problem Book 
\cite{pH82}-Chapter 24, provides a brief history of 
$B(H)$-commutators 
including some proofs. 

The definitive result on $B(H)$-commutators is due to Arlin Brown and Carl Pearcy \cite{BP65} (1965) characterizing its commutators as the non-thin operators,
where the thin operators are operators of the form $\lambda I + K$ with $0 \ne \lambda \in \mathbb C, K \in K(H)$, the ideal of compact operators.

The precursor to this paper \cite{gW05} starts with an elementary 
description of the subject similar to the viewpoint held by the author in the 1970's and continues with a report on the main contributions including references and some open problems spanning 1971-2003 from which our deeper understanding of the subject evolved. 
This subject of commutators of compact operators began with a series of questions due to Pearcy and Topping \cite{PT71} 
and its historical impact described at length in \cite{gW05}.
This sequel is intended to be independent of [ibid], but if more historical breadth should become of interest, at least its introduction should be consulted.
Other notable contributions to single commutators in the context of operator ideals, operator algebras, or minimizing commutator sum representations, came from \cite{lB94}, 
\cite[Section 7]{DFWW}, \cite{MarcouxSmallNumberCommutators}, \cite{MarcouxIrishSurvey}, \cite{Fack} 
(see also their substantial bibliographies). 

This paper reports also a new result: a positive solution to the 35 year old test question mentioned in the abstract exploiting the role in commutator theory of 
tri-block diagonal forms and their closely related staircase forms,
in particular, we make a modest modification of Anderson's deep construction in this subject \cite{jA77}. This leads us herein to pose our next test question: \textit{Characterize in terms of eigenvalues (including multiplicities) which, if not all, strictly positive compact operators are commutators of compact operators}.

Acknowledgment. In the mid 2000's Ken Davidson communicated to the last named author that joint with Marcoux and Radjavi they had solved this test question affirmatively using Anderson's construction, but to date we have not seen this published. In our attempts to strengthen Anderson's construction to cover all strictly positive compact operators and absent that to find some, Davidson's communication played an invaluable motivation. 
We also wish to thank Karl-Hermann Neeb for drawing our attention to Corollary~\ref{Oberwolfach}. 

\section{In the beginning}
The commutator matrix constructions and their various norm formulas developed in \cite{gW75} and \cite{gW86} 
(also described in short in the survey \cite[Theorem 2.1 and Section 4, esp Problem 7]{gW05}) revealed the importance of 
focusing on the diagonal trace class matrices $\diag(-d,d_1,d_2,\dots)$ with $d_n \downarrow 0$ and $d := \sum_{1}^{\infty} d_n$ 
(and by simple normalization the special cases $1 = \sum_{1}^{\infty} d_n$) to determine which of these are commutators of Hilbert-Schmidt operators.
And from this, to focus on the special case finite matrix problem: \\

\noindent Compute the Hilbert-Schmidt norm minimum over $A \in M_4(\mathbb C)$
$$\text{min} \{\Vert A\Vert_{C_2} \mid AB-BA = 
\begin{pmatrix}
-1& 0& 0& 0\\
0& 1/3& 0& 0\\
0& 0& 1/3& 0\\
0& 0& 0& 1/3
\end{pmatrix}\}
$$
subject to scalar normalizing to insure $\Vert A\Vert_{C_2} = \Vert B\Vert_{C_2}$.\\
 
One has the trace norm/Hilbert-Schmidt norm inequality 
$$2\Vert A\Vert_{C_2}^2 = 2\Vert A\Vert_{C_2}\Vert B\Vert_{C_2} \ge \Vert AB\Vert_{C_1} + \Vert BA\Vert_{C_1} \ge \Vert AB-BA\Vert_{C_1} = 2$$
from which one sees that $\Vert A\Vert_{C_2}^2 \ge 1$, that is, the aforementioned minimum is at least $1$.
Indeed the same holds for the entire class $\diag(-d,d_1,d_2,\dots)$ after normalizing to $d = 1$, 
which made the focus on the class $\diag(-1,1/N,\dots,1/N)$ essential and subsequently led to the full solution of the Pearcy-Topping trace class trace zero problem.
The case $\diag(-1,1/2,1/2)$ nontrivally had minimum precisely $1$, and so the question sat from 1973--1976. 
Whether or not this minimum increased to infinity as $N$ increased to infinity at that time seemed essential to solving the whole problem.
The solution showcases the birth of staircase forms, at least for the author \cite{gW80}.

\begin{theorem}[Weiss, 1980]\label{T: 4/3}
$$\min \{\Vert A\Vert_{C_2} \mid AB-BA = \diag(-1,1/3,1/3,1/3)\} = \sqrt\frac{4}{3}.$$
The minimum is attained using \cite[Proposition 8.1]{gW75}:
$$A = \frac{1}{\sqrt 3}\begin{pmatrix}
0 &0 &0 &-1\\
\sqrt 2 &0 &0 &0\\
0 &1 &0 &0\\
0 &0 &0 &0
\end{pmatrix}
\quad \text{and} \quad 
B = \frac{1}{\sqrt 3}\begin{pmatrix}
0 &\sqrt 2 &0 &0\\
0 &0 &1 &0\\
0 &0 &0 &0\\
1 &0 &0 &0
\end{pmatrix}$$
\end{theorem}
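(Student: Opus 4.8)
The plan is to prove the two bounds separately; the upper bound is immediate from the displayed pair and essentially all of the work lies in the lower bound. For the upper bound I would just verify the displayed $A,B$: a direct multiplication shows that $AB$ and $BA$ are both diagonal with $AB-BA=\diag(-1,1/3,1/3,1/3)$, and that $\Vert A\Vert_{C_2}^2=\Vert B\Vert_{C_2}^2=\tfrac13(1+2+1)=\tfrac43$; in particular the normalization $\Vert A\Vert_{C_2}=\Vert B\Vert_{C_2}$ holds, so the pair is admissible and realizes $\sqrt{4/3}$. (These matrices are the $n=3$, $d=1$ instance of \cite[Proposition~8.1]{gW75}.) This step is routine.

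For the lower bound put $D:=\diag(-1,1/3,1/3,1/3)$. Since the rescaling $(A,B)\mapsto(tA,t^{-1}B)$ fixes $AB-BA$ and can be chosen so that $\Vert A\Vert_{C_2}=\Vert B\Vert_{C_2}$, the stated minimum equals $\sqrt{m/2}$ with $m:=\min\{\Vert A\Vert_{C_2}^2+\Vert B\Vert_{C_2}^2 : AB-BA=D\}$, so it suffices to show $m=\tfrac83$. A routine compactness argument (the constraint set is closed, the objective has compact sublevel sets) produces a minimizer, at which there is a Lagrange multiplier $\Lambda\in M_4(\CC)$ with
$$2A=\Lambda B^*-B^*\Lambda,\qquad 2B=A^*\Lambda-\Lambda A^*.$$
Using $D=D^*=B^*A^*-A^*B^*$, a one-line trace computation then gives $\Vert A\Vert_{C_2}^2=\Vert B\Vert_{C_2}^2=\tfrac12\Tr(\Lambda D)$, so $m=\Tr(\Lambda D)$; one checks that the displayed pair satisfies the system with a diagonal $\Lambda$ for which $\Tr(\Lambda D)=\tfrac83$. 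The task has thus become: solve this first-order system together with $AB-BA=D$ and show every solution has $\Tr(\Lambda D)\ge\tfrac83$.

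To do that I would pass to the decomposition $\CC^4=\CC e_1\oplus\mathrm{span}\{e_2,e_3,e_4\}$ adapted to the two eigenspaces of $D=(-1)\oplus\tfrac13 I_3$, and conjugate the pair by a unitary $1\oplus U$ (which fixes $D$) to put $A$ into a staircase/bidiagonal normal form as in \cite{gW80}. Since the diagonal of $AB-BA$ depends only on the off-diagonal parts of $A,B$ while its off-diagonal part ties those parts to the diagonals of $A,B$, the commutator equations in that normal form collapse to a small explicit polynomial system, and minimizing $\Vert A\Vert_{C_2}^2+\Vert B\Vert_{C_2}^2$ there yields $\tfrac83$, the extremum being the displayed pair up to obvious symmetries. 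A convenient way to organize the same computation is combinatorial: the antisymmetric quantities $z_{ij}:=A_{ij}B_{ji}-B_{ij}A_{ji}$ satisfy $\sum_j z_{ij}=D_{ii}$, i.e.\ they form a flow on $K_4$ with divergences $(-1,\tfrac13,\tfrac13,\tfrac13)$, and $\Vert A\Vert_{C_2}^2+\Vert B\Vert_{C_2}^2\ge 2\sum_{i<j}|z_{ij}|$ by the elementary inequality $\Vert M\Vert_{C_2}^2\ge 2|\det M|$ for $2\times 2$ matrices; the cheapest flow (route each $\tfrac13$ straight into vertex $1$) has total mass $1$, but the vanishing of the off-diagonal entries of $AB-BA$ obstructs its realization by a genuine pair, and the cheapest realizable flow has mass $\tfrac43$.

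The hard part is exactly this last point. Every estimate that forgets the requirement ``the off-diagonal part of $AB-BA$ vanishes'' stalls at $1$ — already $2\Vert A\Vert_{C_2}\Vert B\Vert_{C_2}\ge\Vert AB-BA\Vert_{C_1}=2$ forces $m\ge 2$, i.e.\ the minimum $\ge 1$ — so the extra $\tfrac13$ must be wrung out of the off-diagonal equations, and turning their effect into a quantitative bound (via the normal-form reduction, or via a sharp account of which flows on $K_4$ are realizable) is the crux of the proof.
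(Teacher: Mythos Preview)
Your upper-bound verification is fine and matches the paper. The lower bound, however, is not proved: you set up two frameworks (Lagrange multipliers, flows on $K_4$) and then explicitly say that ``turning their effect into a quantitative bound \dots\ is the crux of the proof'' without carrying it out. Neither framework is pushed to the point of yielding $4/3$; the flow bound you state gives only $m\ge 2$ (equivalently $\min\ge 1$), and for the Lagrange system you only assert that the solutions have $\Tr(\Lambda D)\ge 8/3$ without argument. So the proposal has a genuine gap.

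More importantly, your diagnosis of where the difficulty lies is off, and this is what sends you down an unnecessarily hard road. You write that ``every estimate that forgets the requirement `the off-diagonal part of $AB-BA$ vanishes' stalls at~$1$''. That is false: the paper's proof never touches the off-diagonal entries of $AB-BA$. The entire lower bound comes from the \emph{diagonal} equations alone, once $A$ (not $B$) has been put into the staircase form you yourself mention. Concretely: Gram--Schmidt on $e_1,Ae_1,A^*e_1,e_2,e_3,e_4$ produces a unitary $U=1\oplus U_3$ (hence $U^*DU=D$) for which $U^*AU$ has $a_{14}=a_{31}=a_{41}=0$. With those three entries killed, the first and fourth diagonal equations of $AB-BA=D$ read
\[
-1=\sum_{k=2,3}(a_{1k}b_{k1}-b_{1k}a_{k1}),\qquad
\tfrac13=\sum_{k=2,3}(a_{4k}b_{k4}-b_{4k}a_{k4}),
\]
so their difference expresses $-4/3$ as a sum of eight bilinear terms in which each $a_{ij}$ and each $b_{ij}$ appears \emph{at most once}. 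A single Cauchy--Schwarz then gives $4/3\le\Vert A\Vert_{C_2}\Vert B\Vert_{C_2}$. The staircase form is exactly what prevents the double-counting (of $a_{14},a_{41}$) that would otherwise spoil this inequality; no Lagrange multipliers, no off-diagonal constraints, and no realizability analysis of flows is needed.
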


\begin{proof}[Proof that $\sqrt \frac{4}{3}$ is a lower bound.]
See below.
\end{proof}

This led to the full solution of the Pearcy-Topping trace class trace zero problem (Theorem \ref{T: HSII}) by 
determining which among this somewhat general class of diagonal trace class operators are commutators of Hilbert-Schmidt operators.

\begin{theorem}[Weiss \cite{gW80}, 1980]\label{T: HSII}
Setting $d := \sum_{1}^{\infty} d_n$ for an arbitrary sequence $d_n \downarrow 0$, the following are equivalent.
\begin{enumerate}[{\rm(i)}]
\item
$\diag(-d,d_1,d_2,\dots) \in [C_2,C_2]$
\item
$\diag(-d,d_1,d_2,\dots) \in [C_1,B(H)]$
\item
$\sum_{1}^{\infty} d_n \log n < \infty$. 
\end{enumerate}

In particular, if $\langle d_n\rangle = \langle\frac{1}{n\log^2 n}\rangle$, then 
\[
\diag(-d,d_1,d_2,\dots) \in C_1^o \setminus [C_2,C_2]. 
\]
\end{theorem}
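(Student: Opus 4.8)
The plan is to derive the final assertion as an immediate consequence of the equivalence (i)$\Leftrightarrow$(iii) in Theorem~\ref{T: HSII}, together with two elementary integral-test estimates. There is no substantive obstacle: the example $\langle d_n\rangle=\langle 1/(n\log^2 n)\rangle$ is chosen precisely to live in the gap between convergence of $\sum_n d_n$ and convergence of $\sum_n d_n\log n$, so the entire content is carried by the already-established equivalence, and the only care needed is the routine bookkeeping for small indices.

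First I would record that $\langle d_n\rangle=\langle 1/(n\log^2 n)\rangle$, with the usual harmless convention adjusting the first few terms (e.g.\ indexing from $n\ge 3$, or replacing $\log n$ by $\log(n+2)$), is a positive sequence with $d_n\downarrow 0$, so it falls under the standing hypothesis of Theorem~\ref{T: HSII}. Next, to place the operator in $C_1^o$ I would check that the diagonal is absolutely summable: by the integral test, $\sum_n \frac{1}{n\log^2 n}$ converges since $\int_2^\infty \frac{dx}{x\log^2 x}=\frac{1}{\log 2}<\infty$. Hence $d:=\sum_{n=1}^\infty d_n<\infty$, the diagonal operator $\diag(-d,d_1,d_2,\dots)$ lies in $C_1$, and its trace equals $-d+\sum_{n=1}^\infty d_n=0$ by the very definition of $d$; thus it lies in $C_1^o$.

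Finally, to see it is not in $[C_2,C_2]$ I would invoke (i)$\Leftrightarrow$(iii) from Theorem~\ref{T: HSII}: it suffices to verify that condition (iii) fails, i.e.\ that $\sum_n d_n\log n=\infty$. But $d_n\log n=\frac{\log n}{n\log^2 n}=\frac{1}{n\log n}$, and $\sum_n \frac{1}{n\log n}$ diverges, again by the integral test, since $\int_2^R \frac{dx}{x\log x}=\log\log R-\log\log 2\to\infty$ as $R\to\infty$. Therefore (iii) fails, so (i) fails, and $\diag(-d,d_1,d_2,\dots)\notin[C_2,C_2]$. Combining this with the previous paragraph yields $\diag(-d,d_1,d_2,\dots)\in C_1^o\setminus[C_2,C_2]$, as claimed.

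The hard part, if any, is purely cosmetic: honoring $d_n\downarrow 0$ at small indices and writing down the correct form of the two comparisons $\int dx/(x\log^p x)$ for $p=1,2$. All the real work resides in the equivalence of Theorem~\ref{T: HSII}, which we are free to assume here.
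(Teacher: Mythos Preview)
Your proposal is correct and matches the paper's intent: the paper states Theorem~\ref{T: HSII} as a cited result from \cite{gW80} and does not spell out the ``In particular'' claim, treating it as the evident consequence of (i)$\Leftrightarrow$(iii) that you have written out---namely, $\sum 1/(n\log^2 n)<\infty$ while $\sum 1/(n\log n)=\infty$, both by the integral test. There is nothing more to compare; you have supplied exactly the routine verification the paper leaves implicit.
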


And the totally general modern result is:
\begin{theorem}[Dykema, Figiel, Weiss and Wodzicki \cite{DFWW}, 2004]\label{T: DFWW}  \quad \\
If $I,J$ are two arbitrary $B(H)$-ideals, at least one of which is proper, \linebreak 
and $T = T^* \in IJ$, then
\[
T \in [I,J] ~\mbox{if and only if}~ \diag~ \lambda(T)_a \in IJ.
\] 
($\lambda(T)_a$ denotes the arithmetic mean sequence formed from the eigenvalue sequence of $T$, arranged in order of decreasing moduli, counting multiplicities
and when finite rank, ending in infinitely many zeros.)

Consequently, $[I,J] = [IJ,B(H)]$.
\end{theorem}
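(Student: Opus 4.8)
The plan is to prove the equivalence for $T=T^{*}$ and then to deduce $[I,J]=[IJ,B(H)]$ formally. For the consequence, recall first that every $B(H)$-ideal is $*$-closed: from the polar decomposition $T=V|T|$ of $T\in I$ one gets $|T|=V^{*}T\in I$ and $T^{*}=|T|V^{*}\in I$. Hence $IJ$, $[I,J]$ and $[IJ,B(H)]$ are $*$-invariant linear subspaces, with $[I,J]\subseteq IJ$ and $[IJ,B(H)]\subseteq IJ$. Such a subspace is recovered from its self-adjoint part, since $S=\tfrac12(S+S^{*})+i\cdot\tfrac1{2i}(S-S^{*})$ expresses $S$ through the self-adjoint elements $\tfrac12(S+S^{*})$ and $\tfrac1{2i}(S-S^{*})$ of the same subspace. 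Applying the equivalence to the pair $(I,J)$, and then to the pair $(IJ,B(H))$ — legitimate because at least one of $I,J$, hence also $IJ$, is proper, and $(IJ)\cdot B(H)=IJ$ — identifies the self-adjoint parts of $[I,J]$ and of $[IJ,B(H)]$ both with $\{T=T^{*}\in IJ:\ \diag\lambda(T)_{a}\in IJ\}$, so the two subspaces coincide.

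So fix $T=T^{*}\in IJ$ and diagonalize it, $T\cong\diag(\mu_{1},\mu_{2},\dots)$ with $|\mu_{1}|\ge|\mu_{2}|\ge\cdots$, so that $\lambda(T)_{a}$ is the sequence whose $n$-th term is $\tfrac1n(\mu_{1}+\cdots+\mu_{n})$. For necessity, write $T=\sum_{k=1}^{m}[A_{k},B_{k}]$ with $A_{k}\in I$ and $B_{k}\in J$, and let $P_{n}$ be the spectral projection of $T$ onto the span of its first $n$ eigenvectors. Since $P_{n}A_{k}$ has finite rank, cyclicity of the trace gives $\Tr([P_{n}A_{k},B_{k}])=0$, hence
\[
\mu_{1}+\cdots+\mu_{n}=\Tr(P_{n}T)=\sum_{k=1}^{m}\Tr\big(B_{k}[P_{n},A_{k}]\big),
\]
where each $[P_{n},A_{k}]$ has rank at most $2n$. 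A crude estimate of the summands, by trace norms of rank-$n$ truncations of the $A_{k}$, only places $\diag\lambda(T)_{a}$ in an arithmetic-mean ideal that is in general strictly larger than $IJ$; to upgrade this to the genuine membership $\diag\lambda(T)_{a}\in IJ$ one feeds the displayed identity into L.\ Brown's correspondence between traces and $B(H)$-ideals \cite{lB94}, which lies at the core of \cite[Section 7]{DFWW}.

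For sufficiency, assume $\diag\lambda(T)_{a}\in IJ$; one must realize $\diag(\mu_{1},\mu_{2},\dots)$ as a finite sum of commutators $AB-BA$ with $A\in I$ and $B\in J$. The construction globalizes the finite block constructions of Anderson \cite{jA77} and of \cite[Proposition 8.1]{gW75} (the device behind Theorem~\ref{T: 4/3}): one partitions the index set into consecutive finite blocks so that, after passing a single bounded ``carry'' from each block to the next, the diagonal entries sum to zero on every block; on each block one solves the resulting finite trace-zero commutator equation by an explicit staircase (equivalently, block tri-diagonal) pair; and one splices the block solutions into global operators $A$ and $B$. The hypothesis $\diag\lambda(T)_{a}\in IJ$ furnishes exactly the quantitative bookkeeping forcing the singular values of the assembled $A$ to be majorized by a sequence in the characteristic set of $I$ and those of $B$ by one in the characteristic set of $J$; the assumption that one of $I,J$ is proper is what lets the inter-block carries be absorbed without spoiling these estimates. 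This establishes the equivalence, and with it the identity $[I,J]=[IJ,B(H)]$.

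I expect the sufficiency construction to be the main obstacle: arranging the block sizes, the intra-block staircase solutions, and the inter-block carries so that \emph{both} ideal memberships hold simultaneously, and for an arbitrary pair of $B(H)$-ideals — which need not be countably generated or geometrically stable, so one cannot fall back on the regularity properties enjoyed by the Schatten ideals $C_{p}$. A close second is the sharp form of necessity — passing from the easy arithmetic-mean bound to actual membership in the product ideal $IJ$ — which is exactly where Brown's trace theorem is indispensable.
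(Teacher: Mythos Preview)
The paper does not prove this theorem at all: it is stated as ``the totally general modern result'' and attributed to \cite{DFWW}, with no argument given. So there is no in-paper proof to compare your proposal against.

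That said, a few remarks on your outline. Your derivation of the consequence $[I,J]=[IJ,B(H)]$ from the self-adjoint equivalence is complete and correct: both commutator spaces are $*$-closed linear subspaces of $IJ$, the criterion for a self-adjoint element to lie in either one is the same condition $\diag\lambda(T)_a\in IJ$, and a $*$-closed subspace is determined by its self-adjoint part. This part stands on its own.

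For the main equivalence, what you have written is an honest roadmap rather than a proof, and you say so yourself. The necessity sketch is in the right direction --- the identity $\mu_1+\cdots+\mu_n=\sum_k\Tr(B_k[P_n,A_k])$ is indeed the starting observation, and you correctly flag that the passage from the crude arithmetic-mean estimate to genuine $IJ$-membership is where \cite{lB94} and \cite[Section~7]{DFWW} enter. The sufficiency sketch (block partition, trace-zero staircase solutions on blocks, controlled carries) is likewise the shape of the construction in \cite{DFWW}, but the bookkeeping that makes this work for \emph{arbitrary} ideal pairs --- not just Schatten or geometrically stable ones --- is the substance of a long paper, and nothing in your paragraph substitutes for it. In short: the architecture you describe matches what is actually done in \cite{DFWW}, but the proposal as written is a summary of that architecture, not a proof, and the two places you single out as obstacles are exactly where the real work lies.
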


Here is the proof of the ``$\frac{4}{3}$" Theorem introducing also staircase forms.

\begin{proof}[Proof of Theorem \ref{T: 4/3}.] \quad \\
Assume 
\begin{equation}\label{T: 4/3 eq1}
AB-BA = \diag(-1,1/3,1/3,1/3) 
\end{equation}
which solutions exist since this finite matrix has trace $0$ 
(a more general result is due to K. Shoda-1937, for reference see \cite[Bibliography]{DFWW}), and normalized by scalar multiplication insures $\Vert A\Vert_{C_2} = \Vert B\Vert_{C_2}$. 
It is clear that the sequence $e_1, Ae_1, A^*e_1, e_2, e_3, e_4$ spans $\mathbb C^4$ 
($\{e_i\}_{i=1}^4$ denotes the standard basis) and that the Gram-Schmidt process yields another basis for $\mathbb C^4$. 
This provides an associated unitary $U$ that fixes $e_1$ and, for which~$U$, $\Ad_U$ leaves invariant $\diag(-1,1/3,1/3,1/3)$, 
that is, $U^*\diag(-1,1/3,1/3,1/3)U = \diag(-1,1/3,1/3,1/3)$ (equivalently, this diagonal remains the same under this basis change). 
And simultaneously this new basis puts $A$ into ``staircase" form:
$U^*AU = \begin{pmatrix}
*& *& *& 0\\
*& *& *& *\\
0& *& *& *\\
0& *& *& *
\end{pmatrix}$
because of how the Gram-Schmidt process works. That is, 
$A$ sends $e_1$ into a linear combination of the first 2 or fewer vectors in the new basis (depending on their linear independence),
and $A^*$ sends $e_1$ into a linear combination of the first 3 or fewer vectors in the new basis (depending on their linear independence).\\

Computing the diagonal entries of the commutator $AB-BA$ in terms of $A = (a_{ij})$ and $B = (b_{ij})$ one obtains the 4 equations:
\begin{align*}
-1 &= a_{12}b_{21} - b_{12}a_{21} + a_{13}b_{31} - b_{13}a_{31}\\
\frac{1}{3}  &= a_{21}b_{12} - b_{21}a_{12} + a_{23}b_{32} - b_{23}a_{32} + a_{24}b_{42} - b_{24}a_{42}\\
\frac{1}{3}  &= a_{31}b_{13} - b_{31}a_{13} + a_{32}b_{23} - b_{32}a_{23}+ a_{34}b_{43} - b_{34}a_{43}\\
\frac{1}{3} &= a_{42}b_{24} - b_{42}a_{24}+ a_{43}b_{34} - b_{43}a_{34}
\end{align*}
Summing the first 3 equations and taking the first equation yields the 2 equations:
\begin{align*}
-1 &= a_{12}b_{21} - b_{12}a_{21} + a_{13}b_{31} - b_{13}a_{31}\\
\frac{1}{3} &= a_{42}b_{24} - b_{42}a_{24}+ a_{43}b_{34} - b_{43}a_{34}
\end{align*}
It so happens that this second equation is the last of the previous 4 so the summing process is not necessary to obtain it. 
But it is this summing process that generalizes to prove Theorem \ref{T: HSII}.\\

Subtracting one has:
$$-\frac{4}{3} = a_{12}b_{21} - b_{12}a_{21} + a_{13}b_{31} - b_{13}a_{31} - (a_{42}b_{24} - b_{42}a_{24}+ a_{43}b_{34} - b_{43}a_{34}$$
and hence using the triangular and H\"{o}lder inequalities,
\begin{align*}
\frac{4}{3} &\le |a_{12}||b_{21}| + |b_{12}||a_{21}| + |a_{13}b_{31}| + |b_{13}||a_{31}| \\
&\qquad + |a_{42}||b_{24}| + |b_{42}||a_{24}| + |a_{43}||b_{34}| + |b_{43}||a_{34}|\\
&\le \sqrt {|a_{12}|^2 + |a_{21}|^2 + |a_{13}|^2 + |a_{31}|^2 + |a_{42}|^2 + |a_{24}|^2 + |a_{43}|^2 + |a_{34}|^2} \\
&\quad \times \sqrt {|b_{21}|^2 + |b_{12}|^2 + |b_{31}|^2 + |b_{13}|^2 + |b_{24}|^2 + |b_{42}|^2 + |b_{34}|^2 + |b_{43}|^2} \\
&\le \Vert A\Vert_{C_2}\Vert B\Vert_{C_2} = \Vert A\Vert^2_{C_2}.
\end{align*}
The last inequality arises from observing that each $a_{ij}, b_{ij}$ appears no more than once each in the first inequality, and some appear not at all.
The last equality follows from the assumed scalar normalization to make 
$\Vert A\Vert_{C_2} = \Vert B\Vert_{C_2}$ in the equation~\eqref{T: 4/3 eq1}.
Without this normalization one has in general that $\Vert A\Vert_{C_2}\Vert B\Vert_{C_2} \ge \frac{4}{3}$.
\end{proof}

The general staircase form result \cite[Corollary 3]{gW80} (modified here) that led to Theorem~\ref{T: HSII} (\cite[Theorem 5]{gW80}) is:

\begin{corollary}\label{C: HSII}
If $A_1,\dots,A_N$ denotes any finite collection of operators in $B(H)$, 
then there exists a unitary operator $U$ fixing $e_1$ so that $A_1,\dots,A_N$ transform simultaneously matrices with  
their $n^{th}$ row and column nonzero in at most the first $n(2N + 1)$ entries.
If they are selfadjoint, then they are thinner-as above but nonzero for at most $n(N + 1)$ entries.

For a single selfadjoint matrix, this form with inducing change of basis unitary is:
$$U^*AU = \begin{pmatrix}
&* &* &* &3 &0 &0 &0 &0 &\cdots\\
&* &* &* &* &* &* &6 &0 &\cdots\\
&* &* &* &* &* &* &* &* &\cdots\\
&3 &* &* &* &* &* &* &* &\cdots\\
&0 &* &* &* &* &* &* &* &\cdots\\
&0 &* &* &* &* &* &* &* &\cdots\\
&0 &6 &* &* &* &* &* &* &\cdots\\
&0 &0 &* &* &* &* &* &* &\cdots\\
&&&\vdots
\end{pmatrix}$$
\end{corollary}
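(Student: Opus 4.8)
The plan is to build the unitary $U$ by a single application of the Gram--Schmidt process to a carefully ordered spanning list, exactly as in the proof of Theorem~\ref{T: 4/3} but iterated over all the operators $A_1,\dots,A_N$ and over all basis vectors $e_n$. Concretely, I would process the standard basis $e_1, e_2, e_3, \dots$ in order, and when I reach $e_n$ I would insert, before moving on, all the vectors obtained by applying the operators $A_k$ and $A_k^*$ (for $k = 1,\dots,N$) to the vectors already accumulated at that stage. Running Gram--Schmidt on this (highly redundant) list, discarding any vector that is already in the span of its predecessors, yields an orthonormal basis $f_1, f_2, \dots$ with $f_1 = e_1$. The point is bookkeeping: if at the stage associated to $e_n$ we have produced at most $m$ basis vectors so far, then applying the $2N$ operators $A_1, A_1^*, \dots, A_N, A_N^*$ to these and re-orthogonalizing adds at most $2Nm$ new vectors; combined with $e_{n+1}$ itself this gives the recursion $m_{n+1} \le (2N+1)m_n + 1$ for the number $m_n$ of basis vectors produced through stage $n$, and an easy induction bounds $m_n$ by roughly $n(2N+1)^n$—but with the \emph{right} ordering (interleaving so that we do not re-feed vectors we have already differentiated) one arranges instead the linear bound $m_n \le n(2N+1)$.

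**Next I would** record the staircase property. By construction $f_n$ lies in the span of the image under one of the operators (or of $e_{\lceil n/(2N+1)\rceil}$) of basis vectors $f_j$ with $j$ in an earlier block, so each $A_k f_n$ (and each $A_k^* f_n$) lies in the span of $f_1,\dots,f_{(n+1)(2N+1)}$ or so; writing $A_k$ in the basis $\{f_n\}$, this says $\langle A_k f_n, f_\ell\rangle = 0$ once $\ell$ exceeds $n(2N+1)$, i.e. the $n$th column is supported in the first $n(2N+1)$ entries. Doing the same with $A_k^*$ in place of $A_k$ controls the $n$th row. Setting $U$ to be the unitary sending $e_n \mapsto f_n$, the matrices $U^*A_k U$ then have exactly the claimed support. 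In the selfadjoint case, $A_k^* = A_k$, so feeding $A_k e_j$ automatically also handles $A_k^* e_j$; only $N$ (not $2N$) operators get applied at each stage, so the recursion improves to $m_{n+1} \le (N+1)m_n + 1$ and hence the linear bound $m_n \le n(N+1)$, and the matrix is simultaneously symmetric about the diagonal, giving the thinner band.

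**The main obstacle** is not the existence of $U$—that is immediate from Gram--Schmidt—but pinning down the \emph{ordering} of the spanning list so that the count is genuinely linear in $n$ rather than exponential. The naive ``apply everything to everything'' order gives only the exponential bound; the linear bound requires that at stage $n$ we apply the operators only to the \emph{newly created} basis vectors from stage $n-1$ (plus bringing in $e_n$), never re-applying them to older vectors, and then checking that this still produces a spanning set (it does, because the span of $\{e_1,\dots,e_n\}$ together with all words of length $\le n$ in the $A_k, A_k^*$ applied to $e_1$ is reached in the limit, and once $m_n$ stabilizes—i.e. when a block produces no new vectors—it stabilizes forever since the accumulated span is then invariant under all $A_k$ and $A_k^*$). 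I would therefore spend the bulk of the write-up on a clean inductive description of the ordering and the stabilization argument; once that is in place, the support (staircase) statement and the selfadjoint refinement are routine. The explicit $9\times 9$ display with the entries $3$ and $6$ marking where rows/columns first terminate (at positions $3 = 1\cdot(N{+}1)$ and $6 = 2\cdot(N{+}1)$ for $N=2$) is just an illustration of the bound $n(N+1)$ and needs no separate proof.
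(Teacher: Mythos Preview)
Your overall strategy---run Gram--Schmidt on a carefully ordered spanning list, exactly as in the paper's proof of Theorem~\ref{T: 4/3}---is the intended one (the paper itself gives no proof here, only the citation to \cite{gW80}). But the specific ordering you describe does not deliver the linear bound, and this is a genuine gap rather than a write-up issue.

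You correctly diagnose that the naive order (apply all $2N$ operators to all accumulated vectors) gives the exponential recursion $m_{n+1}\le (2N+1)m_n+1$. Your repair---at stage $n$ apply the operators only to the basis vectors \emph{newly created at stage $n-1$}, plus bring in $e_n$---is still exponential: if stage $n-1$ produced $\Delta_{n-1}$ new orthonormal vectors, then stage $n$ produces $\Delta_n\le 2N\,\Delta_{n-1}+1$, hence $\Delta_n=O((2N)^n)$. Restricting to ``new'' vectors is not enough; one must restrict to a \emph{single} vector per stage.

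The ordering that actually yields the bound $n(2N+1)$ is: break the candidate list into blocks of length $2N+1$, where block $n$ consists of
\[
e_n,\ A_1 f_n,\ A_1^* f_n,\ \dots,\ A_N f_n,\ A_N^* f_n.
\]
This is well defined inductively because once $e_1,\dots,e_n$ have been fed in (the first candidate of each of blocks $1,\dots,n$), the running span has dimension at least $n$, so $f_n$ has already been produced before the remaining candidates of block $n$ are needed. After Gram--Schmidt through block $n$ there are at most $n(2N+1)$ orthonormal vectors, and since $A_k f_n$ and $A_k^* f_n$ occur as candidates in block $n$, they lie in ${\rm span}(f_1,\dots,f_{n(2N+1)})$, which is precisely the staircase claim. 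In the self-adjoint case block $n$ shrinks to $e_n,A_1 f_n,\dots,A_N f_n$, giving $n(N+1)$. The spanning and stabilization worries you raise then evaporate, because every $e_n$ appears explicitly in the list.

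One small side point: the displayed matrix is announced as being for a \emph{single} selfadjoint operator, so $N=1$, not $N=2$; the symbols $3$ and $6$ sit at the symmetric positions $(1,4),(4,1)$ and $(2,7),(7,2)$, which suggests they are illustrative entries in a Hermitian matrix rather than column-index markers.
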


In summary, Theorem \ref{T: 4/3} and Corollary \ref{C: HSII} together provided the necessary tools to obtain the full solution to the Pearcy-Topping trace class trace zero problem as mentioned in Theorem \ref{T: HSII}.\\

Notable also relating staircase forms (in particular, the equivalent block upper Hessenberg diagonal perspective) to commutators and subtle trace phenomena is Larry Brown's ideal result \cite{lB94}:

\begin{theorem}[L. G. Brown \cite{lB94}, 1994]\label{T: L.G.Brown} \quad \\
If $A\in C_p$, $B\in C_q$, $p^{-1}+q^{-1}\geq \frac{1}{2}$ and the commutator $[A,B]$ has finite rank, then $\Tr~[A,B]=0$.
\end{theorem}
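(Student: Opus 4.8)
The plan is to reduce Brown's inequality, through a chain of soft steps, to one quantitative estimate that genuinely feels the hypothesis $p^{-1}+q^{-1}\ge\tfrac12$.

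\emph{Reductions and algebraic identities.} Since $C_p\subseteq C_{p'}$ whenever $p\le p'$, we may enlarge $p$ and $q$ so that $p^{-1}+q^{-1}=\tfrac12$ (hence $p,q\ge 2$, with $C_\infty$ read as the compacts); then $AB$ and $BA$ lie in $C_2$, the borderline ideal in which a difference of two operators can be trace class while neither summand is. Because $[A,B]$ has finite rank there is a finite-rank orthogonal projection $P_0$ with $[A,B]=P_0[A,B]P_0$, and for every finite-rank projection $P\ge P_0$ one gets, using only that the trace is cyclic against finite-rank factors,
\[
\Tr[A,B]=\Tr\bigl(P[A,B]\bigr)=\Tr\bigl(A[B,P]\bigr).
\]
Writing $A=AP+A(I-P)$ and then $B=BP+B(I-P)$, the pieces of the commutator that carry the finite-rank projection $P$ have vanishing trace, being trace-class commutators with a finite-rank entry, so one also obtains
\[
\Tr[A,B]=\Tr\bigl[A(I-P),\,B(I-P)\bigr].
\]

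\emph{Approximation.} Pick finite-rank projections $P_n\uparrow I$ with $P_n\ge P_0$. Then $AP_n$ is finite rank, $[AP_n,B]\in C_1$, $\Tr[AP_n,B]=0$, and the identity above gives $\Tr[A,B]=\Tr[A(I-P_n),B(I-P_n)]$ for every $n$, where now $\|A(I-P_n)\|_{C_p}\to 0$ and $\|B(I-P_n)\|_{C_q}\to 0$. Thus the theorem is equivalent to the assertion $\Tr[A(I-P_n),B(I-P_n)]\to 0$. This does \emph{not} follow formally: $A(I-P_n)B(I-P_n)\to 0$ only in the quasinorm of $C_2$, on which the trace is discontinuous, and for $p=q=\infty$ the whole statement is false — Anderson's construction realizes a rank-one projection as a commutator of compact operators — so some honest estimate using the numerical value of $p^{-1}+q^{-1}$ is unavoidable here.

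\emph{The crux.} What is needed is an improvement of the trivial bound
\[
\bigl|\Tr[X,Y]\bigr|\ \le\ 2\,\bigl(\rank[X,Y]\bigr)^{\,1-1/p-1/q}\,\|X\|_{C_p}\|Y\|_{C_q},
\]
itself a combination of the Schatten–H\"older inequality $\|[X,Y]\|_{C_r}\le 2\|X\|_{C_p}\|Y\|_{C_q}$ (with $r^{-1}=p^{-1}+q^{-1}$) and $\sum_{j\le m}s_j\le m^{1-1/r}\|\cdot\|_{C_r}$; it suffices to lower the exponent to $1-p^{-1}-q^{-1}-\varepsilon$, or even just to divide the right-hand side by any $\omega(\rank[X,Y])\to\infty$. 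Indeed, once such a gain is in hand, applying it to $X^{\oplus N},Y^{\oplus N}$ — for which $\|X^{\oplus N}\|_{C_p}=N^{1/p}\|X\|_{C_p}$, $\rank[X^{\oplus N},Y^{\oplus N}]=N\rank[X,Y]$ and $\Tr[X^{\oplus N},Y^{\oplus N}]=N\Tr[X,Y]$ — and letting $N\to\infty$ forces $\Tr[X,Y]=0$ for \emph{every} finite-rank commutator of a $C_p$ operator with a $C_q$ operator, in particular for $[A,B]$. Supplying the gain is the technical heart of \cite{lB94}, and it is the step I expect to be the real obstacle, since everything else is soft: the natural route is to decompose $A$ (or $B$) into diagonal bands along an orthonormal basis adapted to both operators — the staircase and block tri-diagonal device running through this paper — and to estimate each band's contribution by a three-factor H\"older inequality $\|XWY\|_{C_1}\le\|X\|_{C_q}\|W\|_{C_w}\|Y\|_{C_p}$ whose middle factor $W$ is a projection of controlled rank, the constraint $p^{-1}+q^{-1}\ge\tfrac12$ being exactly what forces $w\ge 2$; a complex-interpolation argument on the analytic family $z\mapsto\Tr\bigl(V_A|A|^{1+z}[B,P]\bigr)$ (polar decomposition $A=V_A|A|$), which at $z=0$ equals $\Tr[A,B]$ while on $\mathrm{Re}\,z=1$ puts $V_A|A|^{2+it}$ into $C_{p/2}$ so that the commutator there is genuinely trace class, is an alternative in the same spirit. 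In either approach the threshold $p^{-1}+q^{-1}=\tfrac12$ is sharp, as the $p=q=\infty$ counterexample shows.
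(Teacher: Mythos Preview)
The paper does not prove Theorem~\ref{T: L.G.Brown}; it states the result, cites \cite{lB94}, and adds only the one-sentence remark that Brown's argument uses the finite-rank hypothesis to pass to block upper Hessenberg form with arithmetically growing block sizes, ``an essential feature to make his analytic estimates work.'' There is therefore no proof in the paper to compare your attempt against.

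As for your proposal itself: it is an outline, not a proof, and you say so. Your reductions are correct --- normalizing to $p^{-1}+q^{-1}=\tfrac12$, the identity $\Tr[A,B]=\Tr\bigl[A(I-P),B(I-P)\bigr]$ for any finite-rank projection $P\ge P_0$, and the amplification $X\mapsto X^{\oplus N}$ (which, since $1-p^{-1}-q^{-1}=\tfrac12$ after normalization, shows that \emph{any} gain of a factor $\omega(\rank)\to\infty$ over the trivial bound already forces $\Tr[X,Y]=0$) are all sound. But you explicitly identify the decisive step --- producing that gain over $\bigl|\Tr[X,Y]\bigr|\le 2\bigl(\rank[X,Y]\bigr)^{1/2}\|X\|_{C_p}\|Y\|_{C_q}$ --- as ``the technical heart of \cite{lB94}'' and ``the step I expect to be the real obstacle,'' and you do not carry it out. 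Your suggested mechanism (band decomposition along a staircase basis plus a three-factor H\"older estimate on each band) is exactly the method the paper alludes to: the arithmetic growth of the block sizes, forced by the finite-rank assumption, is what makes the band estimates summable. Until that estimate is actually executed, what you have is a well-organized plan consistent with Brown's approach, not a proof.
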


Here the assumption that the commutator has finite rank leads to block upper Hessenberg diagonal forms with blocks growing arithmetically in size,
an essential feature to make his analytic estimates work.

\section{A rich class of strictly positive compact operators\\ that are single commutators of compact operators}

\begin{theorem} Positive compact operators are commutators of compact operators when they have eigenvalue sequences: 
$$(d_1, \frac{d_2-d_1}{2}, \frac{d_2-d_1}{2}, \frac{d_3-d_2}{3}, \frac{d_3-d_2}{3}, \frac{d_3-d_2}{3}, \dots)$$ 
where $0 \le d_n \uparrow$ but $\frac{d_n}{n} \rightarrow 0$.\\
\textbf{Constructing examples, particularly strictly positive ones, is easy:} 

$d_n = \sqrt n$ or $d_n = log\, n.$

An equivalent and more direct condition is: \\
if $d_n \ge 0$ and $\frac{1}{n}\displaystyle{\sum_{j=1}^n}d_j \rightarrow 0$, then the positive compact operators with eigenvalue sequence
$$(d_1, \frac{d_2}{2}, \frac{d_2}{2}, \frac{d_3}{3}, \frac{d_3}{3}, \frac{d_3}{3}, \cdots)$$
are single commutators of compact operators. 

\end{theorem}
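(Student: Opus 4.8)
The plan is to adapt Anderson's construction (as modified elsewhere in this paper) by building $X$ and $Y$ compact so that $[X,Y]$ is the prescribed positive diagonal operator, organizing $H$ into consecutive finite blocks $H_n$ of dimension $n$. On block $H_n$ the target operator acts as the scalar $\tfrac{d_n}{n}$ (in the second, more direct formulation), so the partial traces over $H_1\oplus\cdots\oplus H_N$ equal $\sum_{j=1}^{N} d_j$. First I would write $X$ as a block-weighted shift, $X = \sum_n c_n S_n$ where $S_n\colon H_n\to H_{n+1}$ is a fixed partial isometry (say sending an orthonormal basis of $H_n$ into the first $n$ vectors of an orthonormal basis of $H_{n+1}$), and take $Y$ supported on the adjacent off-diagonal blocks as well; then $[X,Y]$ is block-diagonal, and one solves on each block $H_n$ the finite-dimensional equation forcing the diagonal to be $\tfrac{d_n}{n} I_{H_n}$. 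The key algebraic input is that a scalar-plus-correction of trace $\sum_{j\le n} d_j$ can be realized as a commutator within a staircase/upper-Hessenberg pattern whose block sizes grow linearly — exactly the regime in which Theorem~\ref{T: L.G.Brown} and the estimates behind Theorem~\ref{T: HSII} and Corollary~\ref{C: HSII} operate.

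Concretely, I would set up the recursion so that the "carry" from block to block is controlled: choose operators $T_n$ on $H_1\oplus\cdots\oplus H_n$ with $\Tr T_n = \sum_{j=1}^n d_j$ and $T_n - T_{n-1}\oplus(\cdot)$ of rank bounded by a constant times $n$, arrange $X,Y$ to route these through the shift $S_n$, and verify that the off-diagonal blocks of $[X,Y]$ telescope to zero. The norm bookkeeping is the heart of the matter: I must show $X,Y$ are compact, i.e. that the block-weights tend to $0$. By the Hilbert–Schmidt/operator-norm trade-off (the same inequality chain displayed in the proof of Theorem~\ref{T: 4/3}), producing trace $\sum_{j\le n} d_j$ across an interface of rank $\asymp n$ costs roughly $\|X\restriction_{H_n}\|\cdot\|Y\restriction_{H_n}\| \gtrsim \tfrac{1}{n}\sum_{j\le n} d_j$; hence the hypothesis $\tfrac1n\sum_{j=1}^n d_j\to 0$ is precisely what lets the weights be chosen to go to $0$, and the two stated forms of the hypothesis are interchanged by the elementary observation that $\tfrac1n\sum_{j\le n}\tfrac{d_{j+1}-d_j}{\text{(shifts)}}$ and $\tfrac1n\sum d_j$ have the same decay (an Abel-summation / Cesàro comparison, with $d_n=\sqrt n$ and $d_n=\log n$ as the advertised examples). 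Finally I would note that strict positivity is automatic once all the listed eigenvalues are positive, which holds as soon as $d_n>0$ and the differences $d_{n+1}-d_n\ge 0$ in the first formulation (monotonicity), so the family genuinely contains strictly positive compacts, settling the 1976 test question.

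The main obstacle I anticipate is not the existence of block-wise commutator solutions — that is guaranteed by the trace-zero condition on each finite piece via Shoda's theorem — but the \emph{simultaneous} control of all blocks: one must choose the partial-isometry shifts and the block solutions coherently so that (a) the cross terms in $[X,Y]$ cancel exactly rather than merely being small, and (b) the resulting weights $c_n, d_n'$ decay. This is exactly where Anderson's idea (realizing a rank-one projection as a commutator of compacts by an infinite "conveyor belt" of finite constructions) must be modified: instead of pushing a single unit of trace along, one pushes a slowly growing amount $\sum_{j\le n} d_j$ through interfaces of slowly growing rank $n$, and the ratio going to $0$ is the compactness. I would structure the write-up as: (1) block decomposition and reduction to the per-block equation; (2) explicit shift-type $X$ and the matching $Y$; (3) verification that off-diagonal blocks vanish; (4) the norm estimate giving compactness from $\tfrac1n\sum d_j\to 0$; (5) the equivalence of the two hypotheses and the examples.
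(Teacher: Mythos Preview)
Your high-level picture is right --- block $H$ into pieces $H_n$ of dimension $n$, take $X,Y$ block tri-diagonal so that $[X,Y]$ is block-diagonal with block $\frac{d_n}{n}I_{H_n}$, and check that the block weights decay --- but the proposal remains a plan rather than a proof. The step you yourself flag as the ``main obstacle'' is the entire content: choosing the inter-block maps coherently so that the off-diagonal cross-terms in $[X,Y]$ cancel \emph{exactly} while the norms decay. Invoking Shoda block-wise gives existence with no norm control at all, and the H\"older chain from Theorem~\ref{T: 4/3} runs the wrong way for your purposes (it is a \emph{lower} bound on $\Vert A\Vert_{C_2}\Vert B\Vert_{C_2}$, good for obstructions, useless for constructions). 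The appeals to Theorems~\ref{T: L.G.Brown} and~\ref{T: HSII} and Corollary~\ref{C: HSII} are likewise decorative here; they concern trace vanishing and Hilbert--Schmidt commutators, not the construction at hand.

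The paper's proof bypasses your obstacle with a one-line device: keep Anderson's \emph{specific} blocks $A_n,B_n,X_n,Y_n$ (which are engineered so that $A_nX_{n+1}-X_nA_{n+1}=0$ and $B_{n+1}X_n-X_{n+1}B_n=0$ already hold) and replace each by its $\sqrt{d_n}$-multiple. The off-diagonal blocks of $[C,Z]$ stay zero because each such block is a homogeneous expression in matrices carrying the same index, so the scalar factor drops out; the diagonal block becomes $-d_n\frac{I}{n}+d_{n+1}\frac{I}{n}=\frac{d_{n+1}-d_n}{n}I$. Compactness of the new $C,Z$ is then the computation $\Vert\sqrt{d_n}\,A_n\Vert=\sqrt{d_n/n}\to 0$ and $\Vert\sqrt{d_n}\,B_n\Vert=\sqrt{d_n}\cdot\frac{\sqrt n}{n+1}\to 0$, both equivalent to $\frac{d_n}{n}\to 0$. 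No recursion, no Shoda, no carry bookkeeping --- Anderson already did the hard work, and the modification is a rescaling. To complete your outline you would have to exhibit a concrete family of block matrices with explicit norm $O(1/\sqrt n)$ realizing the exact cancellations, which is tantamount to rediscovering Anderson's construction. (The equivalence of the two stated hypotheses is, as you say, just the substitution $d_n\leftrightarrow\sum_{j\le n}d_j$; no Abel summation is needed.)
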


\begin{proof} Modify via elementary means Anderson's construction \cite{jA77} for the rank one projection $P$ using his notation.
We prove here the first of the two equivalent conditions of the theorem.

The standard rank one projection
\[
  P=
  \left( \begin{array}{ccc}
	1      & 0  & \hdots \\
	0      & 0 & {}     \\
	\vdots & {} & \ddots \end{array}\right)
\]
admits the commutator representation $P = [C, Z]$ in terms of block
tri-diagonal matrices
\[
C =
\left( \begin{array}{cccc}
	0 & A_1\\
	B_1 & 0 & A_2\\
	{} & B_2 & 0 & \ddots\\
	{} & {} & \ddots & \ddots\end{array}\right)
  \quad\text{and}\quad
Z =
\left( \begin{array}{cccc}
	0 & X_1\\
	Y_{1} & 0 & X_2\\
	{} & Y_2 & 0 & \ddots\\
	{} & {} & \ddots & \ddots\end{array}\right)
\]
where $A_n$ and $X_{n}$ are the $n\times(n+1)$ matrices of norm $\frac{1}{\sqrt n}$
\[
A_n =
{\frac 1n} \left( \begin{array}{ccccc}
	\sqrt n &  0 \\
	{} & \sqrt {n-1} & 0  \\
	{} &   {} & \ddots & \ddots\\
	{} &   {} & {}     & \sqrt 1 &      0\end{array}\right)
  \quad\text{and}\quad
X_n =
{\frac 1n} \left( \begin{array}{ccccc}
	0 & \sqrt 1 \\
	{} & 0 & \sqrt 2 \\
	{} & {} & \ddots & \ddots & {}\\
	{} & {} & {} & 0 & \sqrt n\end{array}\right)
\]
while $B_n$ and $Y_n$ are the $(n+1)\times n$ matrices  of norm $\frac{\sqrt n}{n+1}$
\[
B_n =
-{\frac 1{n+1}} \left( \begin{array}{cccc}
	0 \\
	\sqrt 1 & 0 & \\
	{} & \sqrt 2 & \ddots & {} \\
	{} & {} &  \ddots & 0\\
	{} & {} &  {} & \sqrt n\end{array}\right) \,
 \quad\text{and}\quad
Y_n =
{\frac 1{n+1}} \left( \begin{array}{cccc}
	\sqrt n & {} & {} & {}\\
	0 & \sqrt {n-1} & {} & {}\\
	{} & 0  & \ddots & {}\\
	{} & {} & \ddots & \sqrt 1\\
	{} & {} & {} & 0\end{array}\right).
\]
Then 
\[[C, Z] = \begin{pmatrix}
D_1 & 0 & U_1 & 0 & \cdots\\
0 & D_2 & 0 & U_2 & 0 & \cdots \\
L_1 & 0 & D_3 & 0 & U_3 & 0 \cdots \\
\vdots &\ddots&\ddots& \ddots&\ddots&\ddots&\ddots\\
0 & \cdots & L_n & 0 & D_{n+1} & 0 & U_{n+1} & 0 & \cdots \\
\vdots \\
\end{pmatrix}\]
where L's, D's  and U's are
\[
\begin{matrix}
L & D & U \\
B_2X_1-X_2B_1 & A_1 Y_1-X_1 B_1 = 1 & A_1 X_2-X_1 A_2 \\
B_3X_2-X_3B_2 & B_1 X_1-Y_1 A_1 + A_2 Y_2-X_2 B_2 & A_2 X_3-X_2 A_3 \\
\vdots \\
B_{n+1}X_n-X_{n+1}B_n &  B_n X_n-Y_n A_n + A_{n+1}Y_{n+1}-X_{n+1}B_{n+1} & A_n X_{n+1}-X_n B_{n+1}\\
0 & -\frac{I_n}{n} \qquad \qquad + \qquad \frac{I_n}{n} & 0\\
\end{matrix}
\]

New idea: preserve as much as possible these equations. \\

Elementary modification-replace each $A_n, B_n, X_n, Y_n$ by multiplying each by $\sqrt d_n$.\\

Result:
\[
\begin{matrix}
L & D & U \\
B_2X_1-X_2B_1 & A_1 Y_1-X_1 B_1 = d_1 & A_1 X_2-X_1 A_2 \\
B_3X_2-X_3B_2 & B_1 X_1-Y_1 A_1 + A_2 Y_2-X_2 B_2 & A_2 X_3-X_2 A_3 \\
\vdots \\
B_{n+1}X_n-X_{n+1}B_n &  B_n X_n-Y_n A_n + A_{n+1}Y_{n+1}-X_{n+1}B_{n+1} & A_n X_{n+1}-X_n B_{n+1}\\
0 & -d_n\frac{I_n}{n} \qquad \qquad + \qquad d_{n+1}\frac{I_n}{n} = \frac{d_{n+1}-d_n}{n}I_n & 0\\
\end{matrix}
\]\\

\noindent Therefore to obtain $[C,Z] > 0$ it suffices to choose $d_n \uparrow$ strictly but 
also satisfying the conditions $\sqrt d_n \frac{1}{\sqrt n}$ and $\sqrt d_n \frac{\sqrt n}{n+1} \rightarrow 0$.
Both convergence conditions are equivalent to the single condition $ \frac{d_n}{n} \rightarrow 0$.
\end{proof}

It is interesting to observe the diagonal entry increasing multiplicities, which obstruction we have not yet seen how to overcome.\\

\textit{To integrate this contextually,} 
Corollary \ref{C: HSII} originated from the well-known fact that single selfadjoint operators with a cyclic vector are tridiagonalizable.
And from this corollary for finite collections of selfadjoint operators it follows that every operator (or finite collection of operators) 
is simultaneously finite block-tridiagonal with block sizes growing at most exponentially. Notice that Anderson's construction has block sizes growing arithmetically.
So there is plenty of room for research-on the impacts of varying sizes, like Larry Brown's result mentioned earlier, which exploited arithmetic growth of finite block sizes 
that was a consequence of a commutator being of finite rank.\\

Next natural question: Can $[C,Z] > 0$ with distinct eigenvalues? \\
That is, can a more elaborate modification of Anderson's hard construction achieve more?\\

\noindent Remaining open question: Which positive operators are in single commutators of compact operators?\\

\noindent There may be hope for this (even the general Pearcy-Topping problem: 

Which compact operators are commutators of compact operators). \\

\noindent From staircase forms studied in the late 1970's:\\

\noindent Every $B(H)$ operator is a tri block diagonalizable operator 
where the blocks are rectangles of sizes increasing no more than exponentially.\\

\noindent Is there a way to improve block size control?

\section{Self-commutators in some operator Lie algebras}

The main theme of this section
is that whenever we have to solve an operator equation $[X,Y]=A$ 
it is natural to hope that the symmetry properties of the target operator $A$ are shared by 
the solution operators $X$ and $Y$. 
More specifically, the symmetry properties of $A$ could be encoded by the assumption that it belongs 
to some operator Lie algebra $\gg$ and then one could try to find $X,Y\in\gg$ for which $[X,Y]=A$, 
which gives a certain Lie theoretic flavor for this section of this paper. 

Here we investigate solutions of the commutator equation $[X,Y] = A$ under 
the additional assumptions that $\gg$ is an involutive complex Lie algebra, 
$A=A^*\in\gg$, and $X=Y^*$, hence we will try to solve the so-called \emph{self-commutator equations} $[X^*,X]=A$. 
The main new result is Theorem~\ref{th_C}, and its proof needs only 
the spectral theory for compact operators. 
The second subsection is devoted to presenting some Lie theoretic results that actually motivated us 
to seek the operator theoretic facts obtained in the first subsection, 
namely a new observation (Proposition~\ref{finite_dim}) on self-commutators in  
complex semisimple Lie algebras.  

Since Subsection~\ref{subsect2} deals with finite-dimensional Lie algebras, which are after all matrix Lie algebras 
(see Ado's theorem in \cite[Appendix B.3]{Kn02}), 
while Subsection~\ref{subsect1} deals with operator Lie algebras 
which are infinite-dimensional Lie algebras 
and consist of linear transformations on infinite-dimensional Hilbert spaces,  
we have here an instance of the old principle that the linear algebra structure 
often provides motivation for operator theory. 
We preferred to place the motivation at the end for the only reason that this way of presenting the facts 
emphasizes that one can follow the proof of Theorem~\ref{th_C} without any knowledge of Lie algebras. 

\subsection{Self-commutators in complex classical Lie algebras of compact operators}\label{subsect1}
\hfill 
\\ As in Section~\ref{Sect1}, let  $\Hc$ be a separable infinite-dimensional complex Hilbert space 
with the ideal of compact operators denoted by $\Sginfty(\Hc)$. 
We will discuss operator equations $[X^*,X]=A$ 
where both the unknown operator~$X$ and the given operator $A$ belong to 
one of the Lie algebras of compact operators 
identified below. 

\begin{definition}[\cite{dlH72}]\label{complex_alg}
\normalfont 
Three classical Lie algebras of compact operators are defined as follows.  
\begin{itemize}
\item the \emph{complex classical Lie algebra of type} (A):
$$
\gl_\infty(\Hc):=\Sginfty(\Hc)$$ 
\item the \emph{complex classical Lie algebra of type} (B):
$$
\og_\infty(\Hc):=\{X\in\Sginfty(\Hc)\mid X=-JX^*J^{-1}\},$$ 
where $J\colon\Hc\to\Hc$ is a conjugation 
(i.e., $J$ is a conjugate-linear isometry satisfying  
 $J^2=\1$, where conjugate-linear means additive and 
$J(\alpha v)=\bar\alpha Jv$ for all $\alpha\in\mathbb C$ and $v\in\Hc$) 
\item the \emph{complex classical Lie algebra of type} (C): 
$$
\sp_\infty(\Hc):=\{X\in\Sginfty(\Hc)\mid X=-\widetilde{J}X^*\widetilde{J}^{-1}\},$$ 
where $\widetilde{J}\colon\Hc\to\Hc$ is an anti-conjugation 
(i.e., $\widetilde{J}$ is a conjugate-linear isometry satisfying 
$\widetilde{J}^2=-\1$)
\end{itemize}%
Note that all of the above operator classes $\gl_\infty(\Hc)$, $\og_\infty(\Hc)$, and $\sp_\infty(\Hc)$ 
are involutive complex Lie algebras of compact operators, 
in the sense that they are complex linear subspaces of $\Sginfty(\Hc)$ and are closed 
under the operator commutator and the involution 
given by the Hilbert space adjoint.  
Indeed, $[X,Y] = [-JX^*J^{-1},-JY^*J^{-1}] 
= J[X^*,Y^*]J^{-1} = -J[X,Y]^*J^{-1}$, 
and likewise for $\widetilde{J}$. 
\end{definition}

\begin{remark}\label{anticonj}
\normalfont 
Since the above definition involves conjugate-linear iso\-metries  
and on the other hand the (complex-)linear isometries in Hilbert spaces are sometimes 
described as operators that preserve the scalar product, 
we recall that such a description in terms of scalar products has to be slightly 
changed in the case of the conjugate-linear isometries (\cite[Appendix to Chapter I]{dlH72}). 
Nevertheless, both conjugate-linear isometries and (complex-) linear isometries 
can be described in a unified manner as norm-preserving operators. 

We now recall from \cite{Ba69}, \cite[Appendix to Chapter I]{dlH72} and \cite[Lemma 7.5.6]{HS84} 
a few basic properties of anti-conjugations. 
A complex Hilbert space $H$ admits an anti-conjugation 
if and only if its complex dimension is an even integer or is infinite. 
If $H$ is infinite-dimensional and $\{\bb_n\}_{n\in\ZZ\setminus\{0\}}$ is an orthonormal basis, 
then an anti-conjugation on $H$ can be defined if we set $\widetilde{J}\bb_n=-\bb_{-n}$ 
and $\widetilde{J}\bb_{-n}=\bb_n$
for $n\ge 1$, and then extending $\widetilde{J}$ to a conjugate-linear isometry. 
A similar construction works for finite-dimensional Hilbert spaces of even dimension. 
Conversely, if the Hilbert space $H$ is endowed with an anti-conjugation~$\widetilde{J}$, 
then by using the properties of $\widetilde{J}$ 
(in particular, $\langle\widetilde{J}v,\widetilde{J}w\rangle=\langle w,v\rangle$ for $v,w\in\Hc$) 
one obtains 
$\langle v,\widetilde{J}v\rangle=-\langle\widetilde{J}^2v,\widetilde{J}v\rangle=-\langle v,\widetilde{J}v\rangle$, 
hence $v\perp\widetilde{J}v$, for every $v\in\Hc$.  
By using that observation along with Zorn's lemma, 
one can then construct an orthonormal set ${\mathcal S}\subseteq H$ 
which is maximal with the property that for every $v\in{\mathcal S}$ we also have $\widetilde{J}v\in{\mathcal S}$. 
Maximality of ${\mathcal S}$ and isometricity of $\widetilde{J}$ 
can be used for proving that ${\mathcal S}^\perp=\{0\}$, 
and moreover the property $\widetilde{J}^2=-\1$ shows that ${\mathcal S}$ is a disjoint union of 2-element sets of the form 
$\{v,\widetilde{J}v\}$. 
Thus ${\mathcal S}$ is an orthonormal basis of $H$ which either is infinite
or is finite and contains even number of elements. 
(See \cite[Section 3]{Be09} for more general constructions of orthonormal bases 
associated to conjugations or to anti-conjugations on Hilbert spaces.)

Any two anti-conjugations on $H$ are unitarily equivalent to each other, and one unitary equivalence
that maps them to each other also defines an isomorphism between
the complex classical Lie algebras  of type (C) defined by means of those two anti-conjugations. 
In fact, let $\widetilde{J}_1$ and $\widetilde{J}_2$ be anti-conjugations on $H$ 
satisfying $\widetilde{J}_2=V\widetilde{J}_1V^*$ for some unitary 
operator $V$. 
If $X\in B(H)$ has the property $X=-\widetilde{J}_1X^*\widetilde{J}_1^{-1}$, 
then 
$$VXV^*=-(V\widetilde{J}_1V^*)(VX^*V^*)(V\widetilde{J}_1^{-1}V^*)=-\widetilde{J}_2(VXV^*)^*\widetilde{J}_2^{-1},$$ 
hence the unitary equivalence $\Ad_V\colon B(H)\to B(H)$, $X\mapsto VXV^*$, 
maps the complex classical Lie algebra of type (C) defined by means of $\widetilde{J}_1$ 
into the complex classical Lie algebra of type (C) defined by means of $\widetilde{J}_2$.
For this reason the anti-conjugation used for defining a complex classical Lie algebra of type (C)
is not reflected in the notation of that Lie algebra in this paper.

Similar remarks can be made about conjugations and complex classical Lie algebras  of type (B), 
except for the fact that their existence is not conditional on the even-dimensionality 
of the Hilbert space under consideration.  
That is, conjugations exist on every complex Hilbert space. 
\end{remark}

In the case of complex classical Lie algebras of type (A), 
the characterization of self-commutators of compact operators was obtained in \cite{FF80}. 
For convenience we introduce the following terminology 
in order to state that result (Theorem~\ref{Fan_th}). 

\begin{definition}\label{seq_A}
\normalfont
A sequence of real numbers $\langle\lambda_n\rangle_{n=1}^\infty$ 
is \emph{of type}~(A) 
if it satisfies the conditions 
$$\lim_{n\to\infty}\lambda_n=0\ \text{ and }\ 
\sum_{n\ge 1}\lambda_n^{+}=\sum_{n\ge 1}\lambda_n^{-}\  (\le\infty)$$
where for every $\lambda\in\RR$ we denote $\lambda^{\pm}:=(\vert\lambda\vert\pm\lambda)/2\ge0$. 
\end{definition}

\begin{example}\label{summable}
\normalfont
If in addition 
$\sum\limits_{n\ge1}\vert\lambda_n\vert<\infty$, 
then $\langle\lambda_n\rangle_{n=1}^\infty$ is a sequence of type~(A) if and only if $\sum\limits_{n\ge1}\lambda_n=0$. 
\end{example}

\begin{remark}\label{Fan_lemma}
\normalfont
With the above terminology one can state 
\cite[Lemma 2]{FF80} as: if $\langle\lambda_n\rangle_{n=1}^\infty$ is a sequence of type~(A), 
then there exists a permutation $\sigma\colon\{1,2,\dots\}\to\{1,2,\dots\}$ for which the sequence of sums 
$\langle\lambda_{\sigma(1)}+\cdots+\lambda_{\sigma(n)}\rangle_{n=1}^\infty$ 
is nonnegative and converges to~$0$. 
\end{remark}

Some motivation for introducing the terminology in Definition~\ref{seq_A} 
is provided by Question~\ref{question_BC} below, 
which basically asks for identifying the sequences of real numbers 
which can occur as eigenvalue sequences for self-commutators 
in the complex classical Lie algebras of type (A), (B), (C). 
It would be natural to call these sequences of types (A), (B), (C), respectively, 
but such a definition has the drawback that it is very implicit. 
For the  sequence of type (A) 
we prefer a different approach in 
Definition \ref{seq_A}, 
and we further clarify our reasons as follows. 

The problem of identifying the eigenvalue sequences for Lie algebras of type (A) was solved in \cite{FF80} 
(see Theorem~\ref{Fan_th} below). 
For algebras of type (C), this problem will be solved in Theorem~\ref{th_C} below.  
The problem for classical Lie algebras of type (B) is completely open. 
We do not address it in this paper and we will not propose any explicit definition for the sequences of type~(B).   

First we give the spectral characterization of self-commutators in classical Lie algebras of type (A), 
that is, the involutive Lie algebra 
of all compact operators, which resolved \cite[Problem III]{Pe72} for Fan-Fong \cite[Theorem 1]{FF80}.

\begin{theorem}\label{Fan_th}
If $T=T^*\in\gl_\infty(\Hc)=K(H)$, then the equation $T=[Y^*,Y]$ can be solved for $Y\in\gl_{\infty}(\Hc)$ 
if and only if the sequence of eigenvalues of $T$ repeated according to multiplicities is a sequence of type~(A). 
\end{theorem}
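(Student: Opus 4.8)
The plan is to prove the two directions separately, and I expect the "only if" direction to be the routine one and the "if" direction to be the real work. For the necessity, suppose $T=[Y^*,Y]$ with $T=T^*\in K(H)$ and $Y\in K(H)$. Compactness of $T$ forces its eigenvalue sequence to tend to $0$. For the trace-balancing condition $\sum\lambda_n^+=\sum\lambda_n^-$, I would argue as follows: if $\sum|\lambda_n|<\infty$, then $T\in C_1$ and, since $Y,Y^*\in C_2$ (being compact with $[Y^*,Y]=YY^*-Y^*Y\in C_1$ need not force this in general, so more care is needed) — actually the clean route is that whenever $T=[Y^*,Y]$ is trace class one has $\Tr T=0$ by an approximation argument using finite-rank truncations of $Y$, or by invoking the positive and negative parts $T_+=P T P$, $T_-=(I-P)(-T)(I-P)$ on the spectral subspaces and comparing partial traces of $YY^*$ and $Y^*Y$ restricted to finite-dimensional cuts; the point is that the discrepancy between $\sum_{k\le N}\lambda_k^+$ and $\sum_{k\le N}\lambda_k^-$ is controlled by off-diagonal mass of $Y$ that must vanish in the limit. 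If $\sum|\lambda_n|=\infty$, one must instead show the two one-sided sums are both infinite; here the idea is that a self-commutator cannot have "more positive part than negative part" in the averaged sense, which one extracts by testing $T$ against spectral projections and using that $\langle [Y^*,Y]\xi,\xi\rangle = \|Y\xi\|^2-\|Y^*\xi\|^2$ has zero "total" in an appropriate limiting sense.

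For the sufficiency direction, the core construction is to reduce to building, for each real sequence of type (A), a compact operator $Y$ with $[Y^*,Y]=\diag(\lambda_n)$. The key preparatory step is Remark~\ref{Fan_lemma}: after a permutation we may assume the partial sums $s_n:=\lambda_1+\cdots+\lambda_n$ are nonnegative and $s_n\to 0$. I would then look for $Y$ of weighted-shift type on the orthonormal basis, say $Ye_n = c_n e_{n+1}$ (a unilateral weighted shift) or a suitable block/bilateral variant, for which $YY^* - Y^*Y = \diag(|c_0|^2 - |c_1|^2,\, |c_1|^2-|c_2|^2,\,\dots)$ with the convention $c_0=0$; this is diagonal with entries $|c_{n-1}|^2 - |c_n|^2$, and matching this to $\lambda_n$ gives the telescoping relation $|c_n|^2 = |c_0|^2 - s_n = -s_n \le 0$, which is the wrong sign — so a pure unilateral shift with $c_0=0$ does not work, and one must instead allow $|c_0|^2$ to absorb $\sup_n s_n$ or, better, split $H$ into two halves and use a "shift in, shift out" pattern (this is exactly the role of Fan–Fong's reordering: it lets one arrange $Y$ as a single weighted shift along a reordered basis after grouping indices so that the running sums stay in $[0,\varepsilon]$ and a companion negative-direction shift handles the excess). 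The cleanest packaging: write $Y = S D$ where $S$ is the unilateral shift and $D=\diag(c_n)$, choose $c_n^2 = M - s_n$ with $M=\sup_n s_n < \infty$ (finite because $s_n\to 0$ and $s_n\ge 0$ means the sup is attained or is a limit of a bounded set — actually $\{s_n\}$ nonnegative and convergent is automatically bounded), check $c_n\to \sqrt{M}\neq 0$ which ruins compactness of $Y$ unless $M=0$; so again a genuine block construction is needed when $M>0$, pairing each "deposit" with a later "withdrawal." I would therefore carry out the construction on $H = \bigoplus_k H_k$ with finite-dimensional blocks $H_k$ chosen adaptively so that on each block the partial-sum fluctuation is small, define $Y$ to act as a weighted partial isometry shifting $H_k$ isometrically into a fixed-dimensional subspace of $H_{k+1}$ scaled by a weight $w_k\to 0$, and compute $[Y^*,Y]$ block-diagonally to recover the prescribed eigenvalues; compactness of $Y$ follows from $w_k\to 0$, and the weights $w_k\to 0$ can be arranged precisely because $s_n\to 0$.

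The main obstacle I anticipate is the sufficiency construction in the non-summable case $\sum|\lambda_n|=\infty$, where one cannot treat positive and negative eigenvalues as a small perturbation of each other and the naive telescoping weighted shift fails because the required weights do not tend to zero. Overcoming this is exactly where the type-(A) hypothesis, via the reordering lemma (Remark~\ref{Fan_lemma}) making partial sums nonnegative and null, does the essential work: it guarantees that one can interleave positive-eigenvalue "charging" steps and negative-eigenvalue "discharging" steps so tightly that the accumulated weight along the shift never has to exceed a quantity tending to $0$, hence $Y$ is compact. The remaining steps — verifying $Y\in K(H)$, checking $[Y^*,Y]$ is diagonal with the right entries by a direct block computation, and transporting from the diagonal model back to the given $T$ via the unitary diagonalizing $T$ (legitimate since $T=T^*$ is compact) — are then routine. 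I would also note that the same skeleton, with $\diag$ replaced by a $\widetilde J$-compatible ansatz, is what gets reused for the type (C) result Theorem~\ref{th_C}, so it is worth organizing the block construction so that the symplectic symmetry can be imposed afterward.
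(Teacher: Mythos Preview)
Your sufficiency argument contains a sign slip that sends you far afield. With $Ye_n=c_ne_{n+1}$ you have $Y^*Ye_n=|c_n|^2e_n$ and $YY^*e_n=|c_{n-1}|^2e_n$, so $[Y^*,Y]=Y^*Y-YY^*$ has $n$-th diagonal entry $|c_n|^2-|c_{n-1}|^2$, not $|c_{n-1}|^2-|c_n|^2$ as you wrote. Matching to $\lambda_n$ and telescoping gives $|c_n|^2=s_n$, which is nonnegative and tends to~$0$ precisely by the reordering lemma (Remark~\ref{Fan_lemma}); hence $Y$ is compact. No block decomposition, no $M=\sup_n s_n$, no ``shift in, shift out'' pairing is needed: the plain unilateral weighted shift with weights $\sqrt{s_n}$ already solves $[Y^*,Y]=\diag(\lambda_{\sigma(n)})$, and a unitary conjugation carries this back to $T$. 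This is exactly the construction the paper sketches (attributed to \cite{FF80}); its finite-dimensional model is the matrix~\eqref{Fan_ex_eq1} in Example~\ref{Fan_ex}. Everything you labeled the ``main obstacle'' evaporates once the sign is corrected.

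For necessity your sketch is loose (for instance, $[Y^*,Y]\in C_1$ does not by itself force $Y\in C_2$, so the parenthetical worry you flag is real and unaddressed), but the paper's own ``idea of proof'' does not treat that direction either and simply refers to \cite{FF80}. Your closing remark about recycling the block ansatz for type~(C) is also off target: the paper's proof of Theorem~\ref{th_C} does not use a weighted shift or any reordering lemma. Instead, the anti-conjugation forces the eigenvalues of $T$ to come in pairs $(\lambda_n,-\lambda_n)$ with $\lambda_n\ge0$ (Lemma~\ref{spec_C}), so one can write $T=\sum_n\lambda_n H_n$ directly and take $Y=\sum_n\sqrt{\lambda_n}X_n$ with $X_n=E_{-n,n}$; the cross-commutators $[X_j^*,X_k]$ vanish for $j\ne k$, and compactness is immediate from $\lambda_n\to0$. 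That is a genuinely different (and simpler) mechanism than the type-(A) shift construction.
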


\begin{proof}[Idea of proof] 
See \cite[Theorem 1]{FF80} for details. 
If the sequence of eigenvalues of $T$ is of type~(A), 
then by using the above Remark~\ref{Fan_lemma},  
we may assume that this sequence has nonnegative initial partial sums. 
Then an operator $Y\in\gl_{\infty}(\Hc)$ satisfying $T=[Y^*,Y]$ can be constructed 
as the weighted shift operator whose weights are the square roots of the aforementioned partial sums. 
In other words, $Y$ is an operator defined by an infinite matrix version 
of the matrix~\eqref{Fan_ex_eq1} used in Example~\ref{Fan_ex} below. 
Thus, in \cite{FF80} the method of proof of this theorem 
is merely an extension of the method of Example~\ref{Fan_ex} 
to infinite dimensions which uses the above remark, 
and this stands in contrast to our approach employing Lemma~\ref{Fang_fin} below.
\end{proof}

We will specialize Theorem~\ref{Fan_th} to operators on finite-dimensional Hilbert spaces, 
and will provide full details of the proof in that situation. 
To this end we need the following elementary observation 
which is a simple finite-dimensional version of the fact recorded in Remark~\ref{Fan_lemma}. 

\begin{lemma}\label{Fang_fin}
Let $c_1,\dots,c_n\in\RR$ with $c_1+\cdots+c_n=0$. 
If $c_1\ge\cdots\ge c_n$, then 
for every $j=1,\dots,n$ we have $c_1+\cdots+c_j\ge 0$. 
\end{lemma}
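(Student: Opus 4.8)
The plan is to argue by contradiction using partial sums. Put $S_j := c_1 + \cdots + c_j$ for $j=1,\dots,n$; then the hypothesis reads $S_n = 0$, and the assertion to be proved is that $S_j \ge 0$ for every $j$. The case $j = n$ is immediate, so I would assume toward a contradiction that $S_j < 0$ for some $j$ with $1 \le j \le n-1$.

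The single idea needed is that the ordering $c_1 \ge \cdots \ge c_n$ makes $c_j$ the minimum of $c_1,\dots,c_j$, hence $c_j \le \frac{1}{j}(c_1+\cdots+c_j) = \frac{1}{j}S_j < 0$. Consequently every later term satisfies $c_k \le c_j < 0$ for $k = j+1,\dots,n$, so the tail satisfies $c_{j+1}+\cdots+c_n \le (n-j)c_j < 0$. Adding the two pieces gives $S_n = S_j + (c_{j+1}+\cdots+c_n) < 0$, contradicting $S_n = 0$. Therefore $S_j \ge 0$ for all $j$, which is the claim.

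I do not expect any genuine obstacle here: the statement is elementary and the argument is a two-line estimate. The only points that merit a moment's attention are disposing of the boundary case $j=n$ at the outset and keeping the inequalities strict, since it is exactly the strictness forced by the assumption $S_j < 0$ that collides with $S_n = 0$. An alternative phrasing avoids contradiction by writing $S_j = -(c_{j+1}+\cdots+c_n)$ and bounding the right-hand side from below via $c_k \le c_j$ for $k>j$ together with a short case inspection; but the contradiction form above is the cleanest and is the one I would present.
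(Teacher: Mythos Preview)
Your proof is correct and follows essentially the same route as the paper: assume some partial sum $S_j<0$, deduce $c_j<0$ (hence all later $c_k<0$), and conclude $S_n<0$, a contradiction. The only cosmetic difference is that you bound $c_j$ by the average $S_j/j$ whereas the paper simply notes $\min\{c_1,\dots,c_j\}<0$; both yield $c_j<0$ immediately from the ordering.
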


\begin{proof}
Assume $c_1+\cdots+c_j<0$ for some $j\in\{1,\dots,n\}$. 
Then $\min\{c_1,\dots,c_j\}<0$, 
hence by using the hypothesis we obtain $0>c_j\ge c_{j+1}\ge\cdots\ge c_n$. 
It then follows that 
$c_1+\cdots+c_j+c_{j+1}+\cdots+c_n\le c_1+\cdots+c_j<0$, 
and this is a contradiction to the hypothesis $c_1+\cdots+c_n=0$. 
\end{proof}

\begin{example}\label{Fan_ex}
\normalfont 
Let $T=T^*\in M_{r+1}(\mathbb C)$. 
For finding a matrix $Y\in M_{r+1}(\mathbb C)$ with $[Y^*,Y]=T$, we need to assume $\Tr T=0$, 
which is the natural finite-dimensional version of the condition from Theorem~\ref{Fan_th} 
(compare Example~\ref{summable}). 
By using the spectral theorem we may assume that $T$ is a diagonal matrix, 
say $T=\diag(c_1,\dots,c_{r+1})$, where $c_1,\dots,c_{r+1}\in\RR$ and $c_1+\cdots+c_{r+1}=0$. 
Then after a suitable permutation of the vectors in $\mathbb C^{r+1}$ 
we may assume $c_1\ge\cdots\ge c_{r+1}$, 
since unitary equivalence of $T$ preserves the solvability of the self-commutator equation, 
and then by Lemma~\ref{Fang_fin} we have $a_j:=c_1+\cdots+c_j\ge 0$ for 
$j=1,\dots,r+1$. 
If we define 
\begin{equation}\label{Fan_ex_eq1}
Y:=
\begin{pmatrix}
0 & \sqrt{a_1} &  & \textbf{\large 0}\\
 &    \ddots    &          \ddots &   \\
 &        & \ddots &  \sqrt{a_r} \\
\textbf{\large 0} &        &  &  0
\end{pmatrix} 
\end{equation}
then we have 
$$\begin{aligned}
{}
[Y^*,Y]
&=\diag(a_1,a_2-a_1,a_3-a_2,\dots,a_{r+1}-a_r) \\
&=\diag(c_1,\dots,c_{r+1})=T
\end{aligned}$$
by a direct computation. 
See Example~\ref{finite_dim_ex} for a higher perspective on that computation. 
\end{example}

\begin{remark}
\normalfont
In connection with Example~\ref{Fan_ex} 
it is instructive to study the case $T=\diag(c_1,\dots,c_{r+1})$ with $c_1=\cdots=c_r=\frac{1}{r}$ 
and $c_{r+1}=-1$. 
In this case we have $a_j=\frac{j}{r}$ for $j=1,\dots,r$, 
hence the Hilbert-Schmidt norm of the matrix $Y$ is 
$\Vert Y\Vert_{C_2}=\Vert Y^*\Vert_{C_2}=\sqrt{a_1+\cdots+a_r}=\sqrt{\frac{r+1}{2}}$. 
In particular, for $r=3$ we have $\Vert Y\Vert_{C_2}=\sqrt{2}$, which is  larger than 
the minimum $\sqrt{\frac{4}{3}}$ provided by Theorem~\ref{T: 4/3}. 
\end{remark}

\begin{question}\label{question_BC}
\normalfont
Is it possible to establish versions of Theorem~\ref{Fan_th} 
for the complex classical Lie algebras  of types (B) and (C) 
introduced in Definition~\ref{complex_alg}? 
\end{question}

We will give a 
complete answer to the above question in the case of Lie algebras of type~(C). 
To this end we need the following auxiliary result, which is essentially known, 
but we give it here for completeness. 
%
%

\begin{lemma}\label{spec_C}
Let $\widetilde{J}\colon\Hc\to\Hc$ be the anti-conjugation involved in the definition of $\sp_\infty(\Hc)$. 
If $T=T^*\in\sp_\infty(\Hc)$, then the following assertions hold: 
\begin{enumerate}[{\rm (i)}]
\item\label{spec_C_item1} 
For every $\lambda\in\RR$ we have the conjugate-linear isometry 
$$\widetilde{J}\vert_{\Ker(T-\lambda)}\colon\Ker(T-\lambda)\to\Ker(T+\lambda)$$ 
and $\dim\Ker(T-\lambda)=\dim\Ker(T+\lambda)$. 
Moreover $\dim\Ker T$ is either an even natural number or infinite.
\item\label{spec_C_item2} 
The eigenvalues of $T$ can be labeled as $(\lambda_1,-\lambda_1,\lambda_2,-\lambda_2,\dots)$ 
where 
$\langle\lambda_n\rangle_{n=1}^\infty$ is a sequence of nonnegative real numbers converging to~$0$. 
\end{enumerate}
\end{lemma}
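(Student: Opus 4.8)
The plan is to prove both assertions by exploiting the defining symmetry $T=-\widetilde{J}T^*\widetilde{J}^{-1}=-\widetilde{J}T\widetilde{J}^{-1}$ (using $T=T^*$), which rewrites as $\widetilde{J}T=-T\widetilde{J}$, i.e. $\widetilde{J}$ \emph{anti-intertwines} $T$ with itself. First I would fix $\lambda\in\RR$ and take any $v\in\Ker(T-\lambda)$; then $T\widetilde{J}v=-\widetilde{J}Tv=-\widetilde{J}(\lambda v)=-\lambda\widetilde{J}v$ (here $-\bar\lambda=-\lambda$ since $\lambda$ is real and $\widetilde{J}$ is conjugate-linear), so $\widetilde{J}v\in\Ker(T+\lambda)$. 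Thus $\widetilde{J}$ restricts to a map $\Ker(T-\lambda)\to\Ker(T+\lambda)$; it is a conjugate-linear isometry because $\widetilde{J}$ is one globally, and it is onto because the same argument applied to $-\lambda$ gives $\widetilde{J}\colon\Ker(T+\lambda)\to\Ker(T-\lambda)$, and $\widetilde{J}^2=-\1$ shows these two restrictions are mutually inverse up to the scalar $-1$. Consequently $\dim\Ker(T-\lambda)=\dim\Ker(T+\lambda)$ for every real $\lambda$.

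Next, for the statement about $\Ker T$, I would specialize to $\lambda=0$: then $\widetilde{J}$ maps $\Ker T$ into itself and, by the property $\langle v,\widetilde{J}v\rangle=-\langle v,\widetilde{J}v\rangle$ recalled in Remark~\ref{anticonj}, we have $v\perp\widetilde{J}v$ for all $v$, so in particular $\widetilde{J}$ is an anti-conjugation on the Hilbert space $\Ker T$. By the basic structural fact about anti-conjugations recalled in Remark~\ref{anticonj} (existence of an orthonormal basis that is a disjoint union of $2$-element sets $\{v,\widetilde{J}v\}$, equivalently the classical fact that a Hilbert space carrying an anti-conjugation has even or infinite dimension), $\dim\Ker T$ is either an even natural number or infinite. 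This finishes part~\eqref{spec_C_item1}.

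For part~\eqref{spec_C_item2}, I would invoke the spectral theorem for the compact self-adjoint operator $T$: its nonzero eigenvalues form a (finite or) sequence tending to $0$, each of finite multiplicity, and $\Hc$ decomposes as the orthogonal direct sum of the eigenspaces together with $\Ker T$. By part~\eqref{spec_C_item1}, the nonzero spectrum is symmetric about $0$ with matching multiplicities, so I can list the positive eigenvalues (with multiplicity) as $\mu_1\ge\mu_2\ge\cdots>0$ and pair each $\mu_k$ with $-\mu_k$; the zero eigenvalue, if present, has even or infinite multiplicity by part~\eqref{spec_C_item1}, so its occurrences can likewise be grouped into pairs $0,-0$. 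Interleaving these pairs produces the desired labeling $(\lambda_1,-\lambda_1,\lambda_2,-\lambda_2,\dots)$ with $\langle\lambda_n\rangle$ a nonincreasing (in particular convergent-to-$0$) sequence of nonnegative reals; if one only wants convergence to $0$ rather than monotonicity, any arrangement consistent with $\lambda_n\to 0$ works.

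The only genuinely delicate point is bookkeeping at $\lambda=0$: one must not double-count the kernel when both $\lambda$ and $-\lambda$ are listed, and one must use the \emph{even-or-infinite} dimension of $\Ker T$ (not merely the $v\perp\widetilde{J}v$ relation) to guarantee that the zero eigenvalues can be split evenly between the two interleaved subsequences; everything else is a direct consequence of conjugate-linearity of $\widetilde{J}$, the identity $\widetilde{J}T=-T\widetilde{J}$, and the standard spectral theorem for compact self-adjoint operators, so I expect no real obstacle beyond this indexing care.
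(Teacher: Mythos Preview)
Your proposal is correct and follows essentially the same approach as the paper: derive the anti-commutation $T\widetilde{J}=-\widetilde{J}T$ from $T=T^*\in\sp_\infty(\Hc)$, use it to show $\widetilde{J}$ restricts to a conjugate-linear isometry between $\Ker(T-\lambda)$ and $\Ker(T+\lambda)$ with surjectivity via $\widetilde{J}^2=-\1$, invoke Remark~\ref{anticonj} for the parity of $\dim\Ker T$, and then pair up eigenvalues (including zero) using the spectral theorem for compact self-adjoint operators. Your treatment of part~\eqref{spec_C_item2} is in fact slightly more explicit than the paper's about the bookkeeping at $\lambda=0$, but the argument is the same.
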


\begin{proof}
\eqref{spec_C_item1} 
Since $T=T^*\in\sp_\infty(\Hc)$ we have $T=-\widetilde{J}T^*\widetilde{J}^{-1}=-\widetilde{J}T\widetilde{J}^{-1}$, 
hence $T\widetilde{J}=-\widetilde{J}T$. 
Then for all $\lambda\in\RR$ and $v\in\Hc$  with $Tv=\lambda v$ we have 
$T\widetilde{J}v=-\widetilde{J}Tv=-\lambda\widetilde{J}v$. 
This shows that $\widetilde{J}$ maps $\Ker(T-\lambda)$ into $\Ker(T+\lambda)$, 
and this conjugate-linear isometry is surjective, since $\widetilde{J}^2=-\1$, hence $\widetilde{J}^{-1}=-\widetilde{J}$. 
For $\lambda=0$ it follows that $\widetilde{J}\vert_{\Ker T}$ is an anti-conjugation 
on the complex Hilbert space $\Ker T$, 
and then the assertion on the dimension of $\Ker T$ follows by 
what was already mentioned in the second paragraph of Remark~\ref{anticonj}. 

\eqref{spec_C_item2} 
It follows from Assertion~\eqref{spec_C_item1} that if $\lambda$ is a nonzero eigenvalue of $T$, 
then also $-\lambda$ is a nonzero eigenvalue of $T$ and moreover $\lambda$ and $-\lambda$ 
have equal spectral multiplicities. 
Therefore the sequence of eigenvalues of T repeated according to multiplicities including zero 
can be labeled as indicated in the statement. 
\end{proof}

We are now ready to prove the main new result of this section 
which provides a type~(C) version of Theorem~\ref{Fan_th}. 
The proof of the following theorem is self-contained except for Lemma~\ref{spec_C}. 
See also Remark~\ref{secret}  below for an explanation on the method of proof used here. 

\begin{theorem}\label{th_C}
If $T=T^*\in\sp_\infty(\Hc)$, then the equation $T=[Y^*,Y]$ can be solved for $Y\in\sp_{\infty}(\Hc)$. 
\end{theorem}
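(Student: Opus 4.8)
The plan is to use Lemma~\ref{spec_C} to break the problem into a family of mutually independent two-dimensional problems together with the trivial contribution of $\Ker T$. The structural point is that the anti-conjugation $\widetilde J$ does more than interchange the eigenspaces $\Ker(T-\lambda)$ and $\Ker(T+\lambda)$. Using the spectral theorem for the compact selfadjoint operator $T$ together with Lemma~\ref{spec_C}, I would fix an orthonormal system $(e_n)_{n\ge 1}$ of eigenvectors of $T$ spanning $\overline{\bigoplus_{\lambda>0}\Ker(T-\lambda)}$, say $Te_n=\lambda_n e_n$ with $\lambda_n>0$ (repetitions allowed, to account for multiplicities), and set $f_n:=\widetilde J e_n$. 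Then $Tf_n=-\lambda_n f_n$ and $\widetilde J f_n=\widetilde J^2 e_n=-e_n$; using $\langle\widetilde J v,\widetilde J w\rangle=\langle w,v\rangle$ and the relation $v\perp\widetilde J v$ recalled in Remark~\ref{anticonj}, the system $\{e_n,f_n\}_{n\ge 1}$ is orthonormal and its closed span is $\overline{\Ran T}=(\Ker T)^\perp$; and each $V_n:=\mathrm{span}\{e_n,f_n\}$ is a two-dimensional subspace invariant under both $T$ and $\widetilde J$, with $\widetilde J$ restricting on it to an anti-conjugation. On $\Ker T$, which is $\widetilde J$-invariant with $\widetilde J$ restricting there to an anti-conjugation as well (Lemma~\ref{spec_C}), I would simply set $Y:=0$.

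On the block $V_n$, in the ordered orthonormal basis $(e_n,f_n)$ one has $T|_{V_n}=\diag(\lambda_n,-\lambda_n)$, a selfadjoint $2\times 2$ matrix of trace zero. By the finite-dimensional Fan--Fong construction (Example~\ref{Fan_ex} in the case $r=1$; no preliminary reordering via Lemma~\ref{Fang_fin} is needed since $\lambda_n\ge0$) there is a nilpotent bidiagonal matrix $Y_n\in M_2(\CC)$ with $[Y_n^*,Y_n]=\diag(\lambda_n,-\lambda_n)=T|_{V_n}$. The next step is to verify that this $Y_n$ lies in the type~(C) Lie algebra of $V_n$, i.e.\ $Y_n=-\widetilde J Y_n^*\widetilde J^{-1}$ on $V_n$: this is a one-line $2\times 2$ computation from $\widetilde J e_n=f_n$, $\widetilde J f_n=-e_n$, $\widetilde J^{-1}=-\widetilde J$, and conceptually it just records that in complex dimension two the type~(C) Lie algebra coincides with $\slalg(2,\CC)$, the traceless matrices, to which the nilpotent $Y_n$ certainly belongs.

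Assembling, put $Y:=\Big(\bigoplus_{n\ge1}Y_n\Big)\oplus 0$ relative to the orthogonal decomposition $\Hc=\Big(\bigoplus_{n\ge 1}V_n\Big)\oplus\Ker T$. It then remains to check three routine points. First, $Y\in\sp_\infty(\Hc)$: the summands are pairwise orthogonal and $\widetilde J$-invariant and each local summand of $Y$ solves the corresponding local type~(C) relation, whence $Y=-\widetilde J Y^*\widetilde J^{-1}$; and $Y$ is compact because $\Vert Y_n\Vert=\sqrt{\lambda_n}\to 0$ (the eigenvalues of $T$ tend to $0$ by Lemma~\ref{spec_C}), so the finite truncations $\bigoplus_{n\le N}Y_n$ converge to $Y$ in operator norm. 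Second, $Y^*=\big(\bigoplus_n Y_n^*\big)\oplus 0$, hence $[Y^*,Y]=\big(\bigoplus_n[Y_n^*,Y_n]\big)\oplus 0=\big(\bigoplus_n T|_{V_n}\big)\oplus T|_{\Ker T}=T$. That completes the argument.

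I do not expect a genuinely hard step here. The one thing demanding care is the bookkeeping with the conjugate-linear operator $\widetilde J$ --- above all, arranging the orthonormal eigenbasis of $T$ to be compatible with $\widetilde J$ so that the equation really decouples into $2\times 2$ pieces, which is precisely what Lemma~\ref{spec_C} is for. It is worth stressing why type~(C) is in fact \emph{easier} than type~(A) and why Theorem~\ref{th_C} needs no hypothesis on $T$ at all: the eigenvalues of a selfadjoint operator in $\sp_\infty(\Hc)$ automatically occur in opposite pairs, so each two-dimensional block has trace zero for free; the ``type~(A) balancing condition'' is thus automatic, and one never invokes the global reordering of Remark~\ref{Fan_lemma} that drives the Fan--Fong weighted-shift proof. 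This finite-dimensional, blockwise method is the one indicated in Remark~\ref{secret}.
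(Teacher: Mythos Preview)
Your argument is correct and is essentially the paper's own proof, just phrased in the language of an orthogonal direct sum of $\widetilde J$-invariant two-dimensional blocks rather than via rank-one matrix units $E_{-n,n}$; your $Y_n$ on $V_n=\mathrm{span}\{e_n,f_n\}$ is exactly the paper's summand $\sqrt{\lambda_n}\,X_n$ (up to the harmless sign convention $b_{-n}=-\widetilde J b_n$ versus your $f_n=\widetilde J e_n$). The only cosmetic difference is that the paper absorbs $\Ker T$ into the same sum by allowing $\lambda_n=0$, whereas you treat it separately with $Y=0$; both are fine.
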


\begin{proof}
We can use Lemma~\ref{spec_C}\eqref{spec_C_item2} for labeling 
the eigenvalues of $T$ as $(\lambda_1,-\lambda_1,\lambda_2,-\lambda_2,\dots)$, 
where $\lambda_n\ge 0$ 
for every $n\ge 1$ 
and 
$\lim\limits_{n\to\infty}\lambda_n=0$. 
Let us pick an orthonormal sequence $\{\bb_n\mid n\ge 1\}$ 
such that $T\bb_n=\lambda_n\bb_n$ for every $n\ge 1$. 
If we define $\bb_{-n}:=-\widetilde{J}\bb_n\in\Ker(T+\lambda_n)$ for $n\ge 1$ 
(see Lemma~\ref{spec_C}\eqref{spec_C_item1} again), 
then $\bb:=\{\bb_n\mid n\in\ZZ^*:=\ZZ\setminus\{0\}\}$ is an orthonormal basis of~$\Hc$, 
and throughout the proof it will be convenient to use matrices of the operators on $\Hc$ 
with respect to that basis. 
For $j,k\in\ZZ^*$ we introduce the rank-one operator $E_{jk}:=(\cdot,\bb_k)\bb_j\in\Bc(\Hc)$, 
which corresponds to the matrix that has all entries equal to $0$ 
except for the entry~$1$ on the $j$-th row and $k$-th column. 
Hence $E_{jk}^*=E_{kj}$, $E_{jk}E_{k\ell}=E_{j\ell}$, and $E_{jk} E_{q\ell} = 0$ for all $j,k,q,\ell\in\ZZ^*$ 
for which $k\ne q$. 

We have $T\bb_{\pm j}=\pm\lambda_j\bb_{\pm j}$ for every $j\ge 1$, 
hence $T$ is given by a diagonal matrix with respect to the basis $\bb$ and more precisely we have 
$$T=\sum_{j\ge 1}\lambda_j(\underbrace{E_{jj}-E_{-j,-j}}_{\textstyle\hskip25pt=:H_j})$$
where the series is norm convergent in~$\Bc(\Hc)$. 
In order to construct a solution in $\sp_\infty(\Hc)$ for the self-commutator equation $[Y^*,Y]=T$ 
we first define 
$$X_j:=E_{-j,j}\text{ for }j\ge 1. 
$$
Note that 
$[X_j^*,X_j]=[E_{j,-j},E_{-j,j}]=E_{jj}-E_{-j,-j}=H_j$. 
Now define  
\begin{equation}\label{th_C_proof_eq1}
Y:=\sum_{n\ge 1}\sqrt{\lambda_n}X_n.
\end{equation}
In order to see that the above formula makes sense and $Y\in\sp_\infty(\Hc)$, 
recall that $\langle \lambda_n\rangle_{n=1}^\infty$ 
is a sequence of nonnegative numbers converging to $0$, 
according to the way the $\lambda_n$'s were defined at the beginning of the proof. 
On the other hand, relative to the basis $\{\bb_n\}$, the matrix representation of the operator $Y$ is bi-infinite and anti-diagonal 
(see the definition of the $X_j$'s above)
whose weight sequence is convergent to~$0$.   
Thus $Y$ is in turn a compact operator. 
For proving that $Y\in\sp_\infty(\Hc)$, we still have to check that $Y=-\widetilde{J}Y^*\widetilde{J}^{-1}$, 
and to this end it suffices to check that $X_n=-\widetilde{J}X_n^*\widetilde{J}^{-1}$ for every $n\ge 1$. 
In fact, since $\widetilde{J}$ is conjugate-linear isometry and for $n\ge 1$ 
we have $\widetilde{J}\bb_{\pm n}=\mp \bb_{\mp n}$, 
it follows that for all $m,n\ge 1$ and every $v\in\Hc$ we have 
$$\begin{aligned}
\widetilde{J}E_{\mp m,\pm n}\widetilde{J}^{-1}v
&=-\widetilde{J}E_{\mp m,\pm n}\widetilde{J}v
=-\widetilde{J}((\widetilde{J}v,\bb_{\pm n})\bb_{\mp m}) \\
&=\widetilde{J}((\widetilde{J}v,\widetilde{J}^2\bb_{\pm n})\bb_{\mp m})
=\widetilde{J}((\widetilde{J}\bb_{\pm n},v)\bb_{\mp m}) \\
&=(v,\widetilde{J}\bb_{\pm n})\widetilde{J}\bb_{\mp m}=-(v,\bb_{\mp n})\bb_{\pm m}\\ 
&=-E_{\pm m,\mp n}v
=-E_{\mp n,\pm m}^*v.
\end{aligned}$$
Hence 
$\widetilde{J}E_{\mp n,\pm n}\widetilde{J}^{-1}=-E_{\mp n,\pm n}^*$,
where the subscripts are assumed to have 
opposite signs, 
and this implies $\widetilde{J}X_n^*\widetilde{J}^{-1}=-X_n$ for 
$n\ge 1$.
This completes the verification of the fact that $Y\in\sp_\infty(\Hc)$. 
 
If $j\ne k$, 
then $[X_j^*,X_k]=[E_{j,-j},E_{-k,k}]=0$.  
Therefore
$$
[Y^*,Y]
=\sum_{n\ge 1}\lambda_n[X_n^*,X_n] 
=\sum_{n\ge 1}\lambda_n H_n
=T
$$
and this concludes the proof. 
\end{proof}

\begin{corollary}\label{cor_C}
For every $T\in\sp_\infty(\Hc)$, $T=[X^*,X]+ i[Y^*,Y]$ for some $X,Y \in\gg$. 
\end{corollary}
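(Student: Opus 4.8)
The plan is to reduce Corollary~\ref{cor_C} to Theorem~\ref{th_C} by splitting an arbitrary $T\in\sp_\infty(\Hc)$ into its real and imaginary parts with respect to the involution. First I would set $T_1:=\frac{1}{2}(T+T^*)$ and $T_2:=\frac{1}{2i}(T-T^*)$, so that $T_1=T_1^*$, $T_2=T_2^*$, and $T=T_1+iT_2$. The key point is that $\sp_\infty(\Hc)$ is an \emph{involutive} complex Lie algebra (as noted in Definition~\ref{complex_alg}): it is a complex linear subspace of $\Sginfty(\Hc)$ closed under the Hilbert space adjoint. Hence $T^*\in\sp_\infty(\Hc)$, and therefore both $T_1$ and $T_2$, being complex-linear combinations of $T$ and $T^*$, again lie in $\sp_\infty(\Hc)$.

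Next I would apply Theorem~\ref{th_C} separately to $T_1$ and to $T_2$: since each is a selfadjoint element of $\sp_\infty(\Hc)$, there exist $X,Y\in\sp_\infty(\Hc)$ with $T_1=[X^*,X]$ and $T_2=[Y^*,Y]$. Substituting back gives $T=T_1+iT_2=[X^*,X]+i[Y^*,Y]$, which is exactly the asserted representation with $X,Y\in\gg=\sp_\infty(\Hc)$.

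There is essentially no obstacle here: the only thing to be careful about is confirming that the decomposition $T=T_1+iT_2$ stays inside the Lie algebra, which is immediate from complex-linearity and closure under adjoint, together with the fact (implicit in the discussion preceding the corollary, where $\gg$ denotes the ambient operator Lie algebra) that $\gg=\sp_\infty(\Hc)$. One should also note that Theorem~\ref{th_C} imposes no trace or summability constraint in the type~(C) case---unlike Theorem~\ref{Fan_th} for type~(A)---because Lemma~\ref{spec_C} forces the eigenvalues of any selfadjoint element of $\sp_\infty(\Hc)$ to come in $\pm$ pairs, so the type~(A) condition is automatically satisfied. Thus the argument genuinely covers \emph{every} $T\in\sp_\infty(\Hc)$, and no case analysis is needed. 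I would phrase the whole proof in two or three sentences.
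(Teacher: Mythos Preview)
Your proposal is correct and follows exactly the same approach as the paper: decompose $T=T_1+iT_2$ with $T_j=T_j^*\in\sp_\infty(\Hc)$ and apply Theorem~\ref{th_C} to each part. The paper's proof is the two-line version you anticipated; your additional remarks about closure under the involution and the absence of a trace constraint are accurate but not strictly needed, since these points are already established in Definition~\ref{complex_alg} and Theorem~\ref{th_C} respectively.
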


\begin{proof}
Write $T=T_1+iT_2$ with $T_j^*=T_j\in\sp_\infty(\Hc)$ for $j=1,2$, 
and apply Theorem~\ref{th_C}.  
\end{proof}


\begin{remark}
\normalfont
We note that an interesting result on representation of certain compact operators (with special symmetry properties) 
as single commutators was recently obtained in \cite[Prop. 5.4]{BP12}, 
but the corresponding construction is completely different from the ones used above. 
\end{remark}

\subsection{Single commutators in finite-dimensional complex Lie algebras}\label{subsect2} 
\hfill 
\\
A stronger Lie theoretic flavor can be noticed in this subsection 
as compared with the previous one 
and much more than the other sections of this paper, 
in the sense that 
the proof of its main new result (Proposition~\ref{finite_dim}) 
draws on the basic structure theory of complex semisimple Lie algebras. 
The purpose of the whole discussion below 
is to record a few results on single commutators in complex semisimple (finite-dimensional) Lie algebras  
that may suggest the results one could aim for in infinite dimensions.   
We first recall the following old result: 

\begin{theorem}\label{Gordon}
If $\gg$ is a finite-dimensional complex classical Lie algebra,  
then for every $A\in\gg$ there exist $X,Y\in\gg$ such that $[X,Y]=A$. 
\end{theorem}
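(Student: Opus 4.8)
The plan is to reduce at once to $\gg$ semisimple — every complex classical Lie algebra is semisimple, and if $\gg=\gg_1\oplus\cdots\oplus\gg_r$ is the decomposition into simple ideals then a commutator representation of $A=A_1+\cdots+A_r$ is assembled from those of the $A_i\in\gg_i$ using $[\gg_i,\gg_j]=0$ for $i\ne j$ — and then to combine two ingredients. First, one produces a single $X_0\in\gg$ whose image $\Ran(\ad X_0)$ contains a whole Borel subalgebra; second, one uses that every element of $\gg$ is adjoint-conjugate into a (hence any) Borel subalgebra. Concretely I would fix a Cartan subalgebra $\hg$, a Borel $\mathfrak b=\hg\oplus\nng$ with nilradical $\nng=[\mathfrak b,\mathfrak b]$, a principal (regular) nilpotent element $e\in\nng$, and an $\slalg_2$-triple $(e,h,f)\subseteq\gg$ containing it, furnished by the Jacobson--Morozov theorem.

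The first step is the claim $\mathfrak b\subseteq\Ran(\ad e)$. Because the Killing form $\kappa$ of $\gg$ is nondegenerate and $\kappa([e,z],w)=-\kappa(z,[e,w])$, one has $\Ran(\ad e)=(\Ker\ad e)^{\perp_\kappa}=\mathfrak z_\gg(e)^{\perp_\kappa}$; and since $\kappa(\gg_\alpha,\gg_\beta)=0$ whenever $\alpha+\beta\ne0$, the $\kappa$-orthogonal complement of $\nng$ is exactly $\hg\oplus\nng=\mathfrak b$. So it is enough to check $\mathfrak z_\gg(e)\subseteq\nng$, and this is where principality of $e$ enters: under the adjoint action of the $\slalg_2$ spanned by $e,h,f$, the module $\gg$ has no trivial constituent (a trivial constituent would be a nonzero central element of the semisimple $\gg$), so every $\slalg_2$-highest weight vector has strictly positive $\ad h$-eigenvalue; since $\mathfrak z_\gg(e)=\Ker(\ad e)$ is spanned by such highest weight vectors, and the positive $\ad h$-eigenspaces are contained in $\nng$ (for $h$ principal, $\alpha(h)>0$ for every positive root $\alpha$), we conclude $\mathfrak z_\gg(e)\subseteq\nng$. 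Hence every $A'\in\mathfrak b$ equals $[e,Z]$ for some $Z\in\gg$.

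The second step: every $A\in\gg$ is $\Ad(g)$-conjugate into $\mathfrak b$. Indeed, with the additive Jordan decomposition $A=S+N$ ($S$ semisimple, $N$ nilpotent, $[S,N]=0$, and $S,N\in\gg$), first conjugate $S$ into $\hg$; then $N$ sits, as a nilpotent element, in the reductive subalgebra $\mathfrak z_\gg(S)\supseteq\hg$, and conjugating by an element of the connected subgroup centralizing $S$ (which leaves $S$ fixed) carries $N$ into $\nng\cap\mathfrak z_\gg(S)\subseteq\nng$; thus $A$ is carried into $\hg\oplus\nng=\mathfrak b$. Since $\Ad(g)[X,Y]=[\Ad(g)X,\Ad(g)Y]$, being a single commutator is an $\Ad$-invariant property, so writing $\Ad(g)A=[e,Z]\in\mathfrak b$ gives $A=[\Ad(g^{-1})e,\,\Ad(g^{-1})Z]$ with both entries in $\gg$, which is the assertion. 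For type (A) this is consistent with Shoda's theorem, since one may subtract scalar multiples of $\1$ from $X$ and $Y$ to make them traceless without changing $[X,Y]$.

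I expect the only genuinely nonroutine input to be the structural fact $\mathfrak z_\gg(e)\subseteq\nng$ for a principal nilpotent $e$ — equivalently, that the centralizer of a regular nilpotent element consists of nilpotents and lies in $\nng$. This is precisely why $X_0$ must be taken to be a \emph{principal} nilpotent rather than, say, a regular semisimple element: for a regular semisimple $X_0$ the image $\Ran(\ad X_0)$ is the sum of all root spaces and omits the Cartan subalgebra entirely, so the semisimple part of a general target $A$ could never be produced this way. Everything else — nondegeneracy of $\kappa$, the conjugation-into-a-Borel step, and $\Ad$-invariance of the commutator property — is standard.
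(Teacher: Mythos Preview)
The paper does not actually prove this theorem: its ``proof'' is the single line ``See \cite{Br63} and also \cite{Hi90}.'' So there is nothing to compare at the level of argument, and your self-contained proof goes well beyond what the paper provides. Your strategy---produce a single element $e$ with $\mathfrak b\subseteq\Ran(\ad e)$ and then conjugate an arbitrary $A$ into $\mathfrak b$---is a clean, uniform argument (essentially Kostant's principal $\slalg_2$ picture) and is arguably more conceptual than Brown's original 1963 treatment, which proceeds more directly through the classical types.

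One point deserves tightening. Your parenthetical justification that $\gg$ has no trivial $\slalg_2$-constituent---``a trivial constituent would be a nonzero central element of the semisimple $\gg$''---is not correct as stated: a vector annihilated by $e,h,f$ centralizes the $\slalg_2$-subalgebra, but that subalgebra does not generate $\gg$, so such a vector need not be central in $\gg$. The conclusion is nonetheless true, and the fix is immediate using the ingredients you already have on the table: any trivial constituent lies in the $0$-eigenspace of $\ad h$, which equals $\hg$ since $h$ is regular; and if $v\in\hg$ satisfies $[e,v]=0$ with $e=\sum_i c_iX_{\alpha_i}$ ($c_i\ne0$, $\alpha_i$ simple), then $\alpha_i(v)=0$ for all $i$, forcing $v=0$. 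With this correction the proof stands.
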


\begin{proof}
See \cite{Br63} and also \cite{Hi90}.
\end{proof} 

If $\gg$ is a classical finite-dimensional Lie algebra of type (A), that is, 
if $\gg$ is the Lie algebra $\slalg(n,\mathbb C)$ of matrices in $M_n(\mathbb C)$ having the trace equal to zero, 
then one recovers the celebrated Shoda's theorem. 
Besides this special case, we recall that there exist three other 
classical Lie algebras of types (B), (C), and (D), namely $\og(2n+1,\mathbb C)$, 
$\sp(n,\mathbb C)$, and $\og(2n,\mathbb C)$ respectively, for $n=1,2,\dots$ 
(see \cite{Kn02}) 
to which Theorem~\ref{Gordon} also applies. 


For the statement of the following result (Proposition~\ref{finite_dim}) 
we recall that a finite-dimensional complex Lie algebra $\gg$ is semisimple if 
and only if the so-called Killing form 
$$B_{\gg}\colon\gg\times\gg\to\mathbb C,\quad B_{\gg}(X,Y):=\Tr((\ad_{\gg}X)(\ad_{\gg}Y))$$
is nondegenerate, where for every $X\in\gg$ we define $\ad_{\gg}X\colon\gg\to\gg$, $(\ad_{\gg}X)Y=[X,Y]$ for $X,Y\in\gg$.  
See \cite[Theorem 1.45]{Kn02} for a proof of that characterization of semisimple Lie algebras, 
but we emphasize that for the purpose of understanding the statement of Proposition~\ref{finite_dim} below, 
the above characterization could be well used as a definition 
since it does not involve anything beyond the mere notion of a finite-dimensional Lie algebra.  
If $\gg$ is a complex semisimple Lie algebra, then there exists a conjugate-linear mapping $\gg\to\gg$, $X\mapsto X^*$, 
such that for all $X,Y\in\gg$ we have $(X^*)^*=X$, $[X,Y]^*=[Y^*,X^*]$ and $B_{\gg}(X,X^*)\ge 0$. 
Such a mapping will be called here a \emph{Cartan involution} on the complex semisimple Lie algebra~$\gg$ 
(although that name is usually reserved for the mapping $X\mapsto -X^*$), 
and it is unique up to an automorphism of~$\gg$. 
Now we can prove the following fact, which can be regarded as a self-adjoint version of Theorem~\ref{Gordon}.

\begin{proposition}\label{finite_dim}
Let $\gg$ be a finite-dimensional complex semisimple Lie algebra 
with a fixed Cartan involution.
Then for every $A\in\gg$ satisfying the condition $A=A^*$ 
there exists $Y\in\gg$ such that $A=[Y^*,Y]$.  
\end{proposition}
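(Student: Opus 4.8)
The plan is to reduce the statement, via the basic structure theory of $\gg$, to a root-theoretic computation inside a $*$-stable Cartan subalgebra — a computation that generalizes in essentially one line the type (A) calculation carried out in Example~\ref{Fan_ex}. First I would use the Cartan involution to produce a compact real form: from $(X^*)^*=X$, $[X,Y]^*=[Y^*,X^*]$ and $B_{\gg}(X,X^*)\ge 0$ it follows that $\ug:=\{X\in\gg\mid X^*=-X\}$ is a compact real form of $\gg$ and that $\pg:=\{X\in\gg\mid X^*=X\}=i\ug$, with $\gg=\ug\oplus\pg$ as real vector spaces. Since $A=A^*$ we have $A\in\pg$; enlarge $\RR A$ to a maximal abelian subspace $\mathfrak{a}\subseteq\pg$ containing $A$. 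Then $i\mathfrak{a}$ is a maximal torus of the compact real form $\ug$, so $\hg:=\mathfrak{a}\oplus i\mathfrak{a}$ is a $*$-stable Cartan subalgebra of $\gg$ with $\mathfrak{a}=\hg\cap\pg$, and every root of $(\gg,\hg)$ is real-valued on $\mathfrak{a}$ (being purely imaginary on $i\mathfrak{a}\subseteq\ug$).

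Next I would fix the root-space decomposition $\gg=\hg\oplus\bigoplus_\alpha\gg_\alpha$ and, using that $\alpha(A)\in\RR$ for every root $\alpha$, choose a positive system $\Delta^+$ (with simple roots $\alpha_1,\dots,\alpha_\ell$) for which $A$ is \emph{dominant}, i.e.\ $\alpha(A)\ge 0$ for all $\alpha\in\Delta^+$. Standard theory (see \cite{Kn02}) furnishes root vectors $e_\alpha\in\gg_\alpha$ adapted to this compact real form, namely $e_\alpha^*=e_{-\alpha}$ and $[e_\alpha,e_{-\alpha}]=h_\alpha$, the coroot of $\alpha$ in $\mathfrak{a}$ — the normalization $[e_\alpha,e_{-\alpha}]=h_\alpha$ being achievable after a positive rescaling that preserves $e_\alpha^*=e_{-\alpha}$, since $B_{\gg}(e_\alpha,e_{-\alpha})=B_{\gg}(e_\alpha,e_\alpha^*)>0$ forces $[e_\alpha,e_{-\alpha}]$ to be a positive multiple of $h_\alpha$. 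The simple coroots $h_{\alpha_1},\dots,h_{\alpha_\ell}$ form an $\RR$-basis of $\mathfrak{a}$, so $A=\sum_{i=1}^\ell u_i h_{\alpha_i}$ for unique $u_i\in\RR$; applying the simple roots gives $(\alpha_j(A))_j=C\,(u_i)_i$ with $C$ the Cartan matrix, hence $(u_i)_i=C^{-1}(\alpha_j(A))_j$, and since the inverse Cartan matrix has nonnegative entries while $\alpha_j(A)\ge 0$, we get $u_i\ge 0$ for every $i$.

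With this in hand I would simply set
\[
Y:=\sum_{i=1}^{\ell}\sqrt{u_i}\,e_{-\alpha_i}\in\gg,\qquad Y^*=\sum_{i=1}^{\ell}\sqrt{u_i}\,e_{\alpha_i}\in\gg,
\]
and compute $[Y^*,Y]=\sum_{i,j}\sqrt{u_iu_j}\,[e_{\alpha_i},e_{-\alpha_j}]$. For $i=j$ the bracket equals $h_{\alpha_i}$; for $i\ne j$ it lies in $\gg_{\alpha_i-\alpha_j}$, which is $\{0\}$ because $\alpha_i-\alpha_j$ is neither $0$ nor a root (a difference of two distinct simple roots is never a root). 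Hence $[Y^*,Y]=\sum_i u_i h_{\alpha_i}=A$, as desired. Specializing to $\gg=\slalg(n,\CC)$, the $u_i$ are exactly the partial sums appearing in Example~\ref{Fan_ex} and the cancellation is the one used there, which is the sense in which this argument is the ``higher perspective'' alluded to in that example.

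The main obstacle, as I see it, is not any single deep step but getting the structure-theoretic set-up precisely right: aligning the Weyl basis $\{e_\alpha\}$ with the compact real form determined by the \emph{given} Cartan involution so that $e_\alpha^*=e_{-\alpha}$ and $[e_\alpha,e_{-\alpha}]=h_\alpha$ hold simultaneously, and tracking sign conventions so that the computation lands on $+A$ rather than $-A$ — the passage to a positive system making $A$ dominant is exactly what takes care of the latter. Everything after that is the short computation displayed above; no analysis beyond finite-dimensional linear algebra and the elementary fact about differences of simple roots is needed.
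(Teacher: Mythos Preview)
Your proof is correct and follows essentially the same route as the paper's: pick a $*$-stable Cartan subalgebra containing $A$, choose a simple system so that the coefficients of $A$ in the simple-coroot basis are nonnegative, and set $Y$ to be the corresponding square-root-weighted sum of simple root vectors, using that a difference of distinct simple roots is never a root. The only variation is that you justify the nonnegativity of the coroot coordinates explicitly (make $A$ dominant, then apply nonnegativity of the inverse Cartan matrix), whereas the paper simply invokes \cite[Corollary~2.68]{Kn02} for that step; your $Y$ and the paper's $Y$ differ only by interchanging $Y$ and $Y^*$.
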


\begin{proof}
Since $A=A^*$, we can find a maximal abelian self-adjoint subalgebra $\hg$ of $\gg$ with $A\in\hg$. 
Denote the eigenspaces for the adjoint action of $\hg$ on $\gg$ by 
$$\gg^\alpha:=\{X\in\gg\mid(\forall H\in\hg)\quad [H,X]=\alpha(H)X\}
\text{ for a linear functional }\alpha\colon\hg\to\mathbb C$$
and consider the corresponding set of roots
$$\Delta(\gg,\hg):=\{\alpha\in\hg^*\mid \gg^\alpha\ne\{0\}\}. $$
Then $\hg=\gg^0=\{X\in\gg\mid[\hg,X]=\{0\}\}$ and the root space decomposition is given by 
$$\gg=\hg\oplus\bigoplus_{\alpha\in\Delta(\gg,\hg)\setminus\{0\}}\gg^\alpha$$
by \cite[Equations (2.16) and (2.22)]{Kn02}. 
For every $\alpha\in\Delta(\gg,\hg)\setminus\{0\}$ we have $\dim\gg^\alpha=1$ 
and we may choose $X_\alpha\in\gg^\alpha\setminus\{0\}$ such that $X_\alpha^*=X_{-\alpha}$ 
(see \cite[Chapter VI, \S 1--2]{Kn02}). 
Let $\{\alpha_1,\dots,\alpha_r\}\subset\Delta(\gg,\hg)\setminus\{0\}$ be a system of simple roots, 
where $r=\dim\hg$, and denote $H_j:=[X_{-\alpha_j},X_{\alpha_j}]=H_j^*\in\hg$ for $j=1,\dots,r$. 
Then $\{H_1,\dots,H_r\}$ is a basis in $\hg$, 
hence there exist $a_1,\dots,a_r\in\RR$ such that 
$$A=a_1H_1+\cdots+a_rH_r.$$
It follows by \cite[Corollary 2.68]{Kn02} that after a suitable re-labeling of roots we may arrange that   
$a_1,\dots,a_r\in[0,\infty)$. 
Then we define 
\begin{equation}\label{finite_dim_proof_eq1}
Y:=\sqrt{a_1}X_{\alpha_1}+\cdots+\sqrt{a_r}X_{\alpha_r}.
\end{equation}
Since $\alpha_1,\dots,\alpha_r$ are simple roots, it follows by \cite[Lemma 2.51]{Kn02} 
that for $j\ne k$ we have $-\alpha_j+\alpha_k\not\in\Delta(\gg,\hg)\setminus\{0\}$, 
hence by \cite[Proposition 2.5(c)]{Kn02} 
we obtain $[\gg^{-\alpha_j},\gg^{\alpha_k}]\subseteq\gg^{-\alpha_j+\alpha_k}=\{0\}$, 
and in particular $[X_{-\alpha_j},X_{\alpha_k}]=0$. 
On the other hand $[X_{-\alpha_j},X_{\alpha_j}]=H_j$ for $j=1,\dots,r$, 
hence we obtain 
$$[Y^*,Y]=\sum_{j,k=1}^r\sqrt{a_ja_k} [X_{-\alpha_j},X_{\alpha_k}]=
\sum_{j=1}^ra_jH_j=A$$
and this concludes the proof. 
\end{proof}

Let us note the following easy consequence, which for the finite-dimensional classical complex Lie algebras 
is actually weaker than the information provided by Theorem~\ref{Gordon}. 

\begin{corollary}\label{Oberwolfach}
If $\gg$ is a finite-dimensional complex semisimple Lie algebra, 
then for every $A\in\gg$ there exist $X_1,X_2,Y_1,Y_2\in\gg$ for which 
$A=[X_1,X_2]+[Y_1,Y_2]$. 
\end{corollary}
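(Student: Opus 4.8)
The plan is to reduce Corollary~\ref{Oberwolfach} to Proposition~\ref{finite_dim} by splitting an arbitrary element of $\gg$ into ``real'' and ``imaginary'' parts with respect to a fixed Cartan involution $X\mapsto X^*$. First I would fix such a Cartan involution on the complex semisimple Lie algebra $\gg$, which exists and is unique up to automorphism by the discussion preceding Proposition~\ref{finite_dim}. Given $A\in\gg$, I would write $A=A_1+iA_2$ where $A_1:=\tfrac12(A+A^*)$ and $A_2:=\tfrac{1}{2i}(A-A^*)$; since $(X^*)^*=X$ and the map is conjugate-linear, one checks directly that $A_1^*=A_1$ and $A_2^*=A_2$, and that $A_1,A_2\in\gg$ because $\gg$ is closed under the involution and under complex scalar multiplication.

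Next I would apply Proposition~\ref{finite_dim} separately to $A_1$ and to $A_2$, obtaining $Y_1,Y_2\in\gg$ with $A_1=[Y_1^*,Y_1]$ and $A_2=[Y_2^*,Y_2]$. Setting $X_1:=Y_1^*$, $X_2:=Y_1$, and observing that $[iY_2^*,Y_2]=i[Y_2^*,Y_2]=iA_2$, I would write $A=[X_1,X_2]+[iY_2^*,Y_2]$, so the desired four elements can be taken as $X_1=Y_1^*$, $X_2=Y_1$, and then $Y_1^{\mathrm{new}}=iY_2^*$, $Y_2^{\mathrm{new}}=Y_2$ (a minor clash of names with the statement's $Y_1,Y_2$, which I would simply rename in the write-up). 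Thus $A$ is a sum of two single commutators of elements of $\gg$, which is exactly the claim.

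I do not expect a serious obstacle here: the only points requiring a word of care are that the Cartan involution is conjugate-linear (so that $A=A_1+iA_2$ with both parts self-adjoint is the correct decomposition, in contrast to the linear-involution case), and that absorbing the factor $i$ into one of the commutators is legitimate because $\gg$ is a complex Lie algebra and the bracket is $\mathbb{C}$-bilinear. No structure theory beyond Proposition~\ref{finite_dim} is needed, which is the point of billing this as an ``easy consequence.'' The write-up would be three or four lines, essentially the two-line computation above plus the remark about self-adjointness of $A_1$ and $A_2$.
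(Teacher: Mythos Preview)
Your proposal is correct and is essentially identical to the paper's own proof: the paper also writes $A=A_1+iA_2$ with $A_j^*=A_j$ and then applies Proposition~\ref{finite_dim} to each summand. The extra details you supply (explicit formulas for $A_1,A_2$, verification of self-adjointness, and absorbing the factor $i$ into one bracket) are fine and do not change the argument.
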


\begin{proof}
Write $A=A_1+iA_2$ with $A_j^*=A_j$ for $j=1,2$, 
and then use Proposition~\ref{finite_dim} for $A_1$ and $A_2$.  
\end{proof}

\begin{remark}\label{reason}
\normalfont
In the proof of Proposition~\ref{finite_dim} we chose $\{\alpha_1,\dots,\alpha_r\}$ 
to be a system of simple roots for the sake of simplicity. 
However, it is easily noticed that the only properties required 
from $\{\alpha_1,\dots,\alpha_r\}$ are the following ones: 
\begin{enumerate}
\item\label{reason_item1} The roots $\alpha_1,\dots,\alpha_r$ are positive with respect to some ordering 
on the self-adjoint part of the Cartan subalgebra under consideration. 
\item\label{reason_item2}  If $j\ne k$, then $\alpha_k-\alpha_j\not\in\Delta(\gg,\hg)\setminus\{0\}$. 
\item\label{reason_item3} The Cartan subalgebra is spanned by the co-roots of $\alpha_1,\dots,\alpha_r$. 
\end{enumerate}
In particular, it is not necessary for $\{\alpha_1,\dots,\alpha_r\}$ to span the whole set of positive roots. 
For instance, if $\gg=\so(2n+1,\mathbb C)$ (the simple Lie algebra of type $B_n$) 
with the root system denoted as usual by $\{e_i\pm e_j\mid 1\le i<j\le n\}\cup\{\pm e_i\mid 1\le i\le n\}$ 
(\cite[Appendix C]{Kn02}), 
then the subset $\{e_i\mid 1\le i\le n\}$ satisfies the above conditions \eqref{reason_item1}--\eqref{reason_item3}
although it fails to be system of simple roots. 
Similarly, if $\gg=\sp(n,\mathbb C)$ (the simple Lie algebra of type $C_n$) 
with the root system 
$\{e_i\pm e_j\mid 1\le i<j\le n\}\cup\{\pm 2e_i\mid 1\le i\le n\}$ 
(\cite[Appendix C]{Kn02}), 
then the subset $\{ 2e_i\mid 1\le i\le n\}$ also satisfies the above conditions \eqref{reason_item1}--\eqref{reason_item3}
although it fails to be a system of simple roots. 
An infinite-dimensional version of the latter example was used in the proof of Theorem~\ref{th_C}. 
\end{remark}

\begin{example}\label{finite_dim_ex}
\normalfont
We wish to show here that by specializing the above construction from the proof of Proposition~\ref{finite_dim} for  
$$\gg=\slalg(r+1,\mathbb C):=\{X\in M_{r+1}(\mathbb C)\mid\Tr X=0\}$$ 
one obtains precisely the matrix $Y$ from Example~\ref{Fan_ex}. 
It follows by \cite[Cor. 6.22]{Kn02} that we may assume that the Cartan involution $X\mapsto X^*$ 
is defined by conjugate transpose. 
By using the spectral theorem we may also assume that $A=A^*\in\slalg(r+1,\mathbb C)$ is a diagonal matrix, 
say $A=\diag(c_1,\dots,c_{r+1})$, where $c_1,\cdots,c_{r+1}\in\RR$ and $c_1+\cdots+c_{r+1}=0$. 
Then after a suitable permutation of the vectors in $\mathbb C^n$ 
(or equivalently, of the rows and columns of $A$) we may assume $c_1\ge\cdots\ge c_{r+1}$, 
and then by Lemma~\ref{Fang_fin} we have $a_j:=c_1+\cdots+c_j\ge 0$ for 
$j=1,\dots,r+1$. 

A maximal abelian self-adjoint subalgebra of $\slalg(r+1,\mathbb C)$ containing $A$ 
is given by the set $\hg$ of all diagonal matrices in $\slalg(r+1,\mathbb C)$. 
Let $E_{jk}\in M_{r+1}(\mathbb C)$ be the matrix whose entry on the position $(j,k)$ is equal to $1$ 
and the other entries are equal to~$0$. 
We have $\Delta(\gg,\hg)=\{\alpha_{jk}\mid j,k\in\{1,\dots,n+1\},\ j\ne k\}$, 
where 
$$\alpha_{jk}\colon\hg\to\mathbb C,\quad \alpha_{j,k}(H)=\Tr((E_{jj}-E_{kk})H)$$
and the corresponding root space is $\gg^{\alpha_{j,k}}=\mathbb C E_{jk}$. 
We will choose $X_{\alpha_{jk}}:=E_{jk}$ and then $X_{\alpha_{jk}}^*=E_{kj}=X_{\alpha_{kk}}=X_{-\alpha_{jk}}$, 
and moreover $[X_{\alpha_{jk}},X_{-\alpha_{jk}}]=E_{jj}-E_{kk}$.
 
A system of simple roots is $\{\alpha_{j,j+1}\mid 1\le j\le r\}$   
and then 
$$\{H_j:=E_{jj}-E_{j+1,j+1}\mid 1\le j\le r\}$$ 
is the corresponding basis in $\hg$. 
The matrix $A=\diag(c_1,\dots,c_{r+1})$ with $c_1+\cdots+c_r+c_{r+1}=0$ can be written as 
$$\begin{aligned}
A
=&c_1E_{11}+\cdots+c_rE_{rr}+c_{r+1}E_{r+1,r+1} \\
=&\sum_{j=1}^r c_jE_{jj}-(\sum_{j=1}^r c_r)E_{r+1,r+1} \\
=&c_1(E_{11}-E_{22})+(c_1+c_2)(E_{22}-E_{33})+\cdots \\
&+(c_1+\cdots+c_r)(E_{rr}-E_{r+1,r+1}) \\
=&\sum_{j=1}^r (\underbrace{c_1+\cdots+c_j}_{\hskip25pt=\,a_j\,\ge \,0}) H_j 
\end{aligned}$$
by using the notation introduced above. 
In the present setting, the equation \eqref{finite_dim_proof_eq1} from the proof of Proposition~\ref{finite_dim} 
specializes as 
\begin{equation}\label{finite_dim_ex_eq1}
Y=\sqrt{a_1}E_{12}+\cdots+\sqrt{a_r}E_{r,r+1}
\end{equation}
which is just the matrix from \eqref{Fan_ex_eq1}. 
Then we have 
$[Y^*,Y]=A$
by the direct computation mentioned in the end of Example~\ref{Fan_ex}, 
which is actually a specialization of the reasoning from the proof of the above Proposition~\ref{finite_dim}.
\end{example}

\begin{remark}\label{secret}
\normalfont
The construction of the operator $Y$ in the proof of Theorem~\ref{th_C} 
is inspired by the method of proof of Proposition~\ref{finite_dim}, 
which used the root space decomposition of a complex semisimple Lie algebra; 
particularly note the similarities between the formulas 
\eqref{Fan_ex_eq1}, \eqref{finite_dim_proof_eq1}, \eqref{finite_dim_ex_eq1}, and  \eqref{th_C_proof_eq1}. 
We recall that the root system of the infinite-dimensional classical Lie algebra $\sp_\infty(\Hc)$ 
can be found for instance in \cite[page 41, Prop. 4C]{dlH72}, 
and the above rank-two operators $X_j$ are root vectors corresponding 
to the simple root systems indicated in \cite[Eq. (5.8)]{NRW01}; 
see also \cite{Sch60}. 
\end{remark}


\bigskip

\end{document}